\newtheorem{definition}{Definition}
\newtheorem{proposition}{Proposition}
\newcommand{\figpdf}[3][H]{
  \begin{figure}[#1]
    \hskip -1cm
    \includegraphics[width=\textwidth]{figure-#2}
    \caption{#3}
    \label{fig:#2}
  \end{figure}
}
\renewcommand{\r}{ \mathbf{ r} }
\newcommand{\rileft}[1][i]{\underline r_{#1}}
\newcommand{\riright}[1][i]{\overline r_{#1}}
\DeclareMathOperator*{\argmin}{arg\,min}
\newcommand{\RR}{\mathbb{R}}
\begin{document}


  


  \title{A breakpoint detection error function for segmentation model
    selection and evaluation}
  \author{Toby Dylan Hocking}

\maketitle

  \begin{abstract}
    We consider the multiple breakpoint detection problem, which is
    concerned with detecting the locations of several distinct changes
    in a one-dimensional noisy data series. We propose the
    breakpointError, a function that can be used to evaluate estimated
    breakpoint locations, given the known locations of true
    breakpoints. We discuss an application of the breakpointError for
    finding optimal penalties for breakpoint detection in simulated
    data. Finally, we show how to relax the breakpointError to obtain
    an annotation error function which can be used more readily in
    practice on real data. A fast C implementation of an algorithm
    that computes the breakpointError is available in an R package on
    R-Forge.
  \end{abstract}

\tableofcontents




  
 
\newpage

\section{Introduction to segmentation models}

The goal of a segmentation model or algorithm is to divide a series of
data into distinct segments. A major application of segmentation
models is in detecting changes in copy number in cancer, using
technologies such as array comparative genomic hybridization
\citep{PSS98}. In these noisy biological data sets, the goal of
segmentation is to detect the precise base pairs or genomic positions after
which there are changes in copy number.

How to evaluate the accuracy of a segmentation model? A new method for
supervised segmentation of copy number data was proposed by
\citet{HOCKING-breakpoints}, who quantified the segmentation model
accuracy using an annotation database containing visually-determined
regions with or without breakpoints.
This method depends critically on the
definition of the visually-determined annotated region database, which
is used to compute an annotation error function. 

This paper continues
this line of research by defining the breakpointError function, which
uses the true breakpoints to precisely compute the accuracy of a
segmentation model. Also in this paper we demonstrate that the
breakpointError is closely related to the annotation error, thus
giving a theoretical foundation to the very practical new methods
based on visually-determined annotated region databases.

In this introduction, we first discuss a few motivating examples with
figures. In Section~2 we discuss related work, and in Section~3 we
define the breakpointError. In Section~4 we show an
application of the breakpointError, and in Section~5 we discuss its
relationship to the annotation error. In general we use bold to denote
vectors ($\mathbf x, \mathbf{\hat y}^k$) and plain text to denote
elements of those vectors ($x_i, y_i^k$) and scalars ($p, \hat
\sigma^2_k$).

\subsection{Definition of breakpoints}

Assume there are $P$ distinct positions in a series at
which data could be gathered. Let $\mathcal P= \{1,\dots,P\}$ be the
set of all such positions. For every position $p\in\mathcal P$, we
assume there is some true probability distribution $D_p$. Let $\mathbb
B=\{1,\dots,P-1\}$ be all bases after which a breakpoint is possible.

\begin{definition}
  A \textbf{breakpoint} is any position $p\in\mathbb B$ for which the
  next position does not have the same distribution: $D_p \neq
  D_{p+1}$. 
\end{definition}

For a series with $P$ positions, there is a minimum of 0 breakpoints
($D_1=\cdots=D_P$) and a maximum of $P-1$ breakpoints ($D_1 \neq
\cdots \neq D_P$). Note that the changes in distribution may be in
mean, variance, or any other parameters that affect the distribution.

The segmentation algorithm is given a sample of size $d \leq P$ of
data $(p_1, y_1), \dots, (p_d, y_d)$, with positions $p_i\in\mathcal
P$ and noisy observations $y_i\sim D_{p_i}$ for all samples $i\in\{1,
\dots, d\}$.

\newpage

For example, consider the normal distributions and simulated data
shown in Figure~\ref{fig:motivation}. The two panels show different,
separate segmentation problems. The top panel shows a problem with two
changes in mean, and the bottom panel shows two changes in
variance. For both panels in the figure, there are $P=500$ distinct
positions, $d=100$ simulated samples, and two breakpoints:
$\{300, 400\}$.

\begin{figure}[h!]
  \centering
  \includegraphics[width=\textwidth]{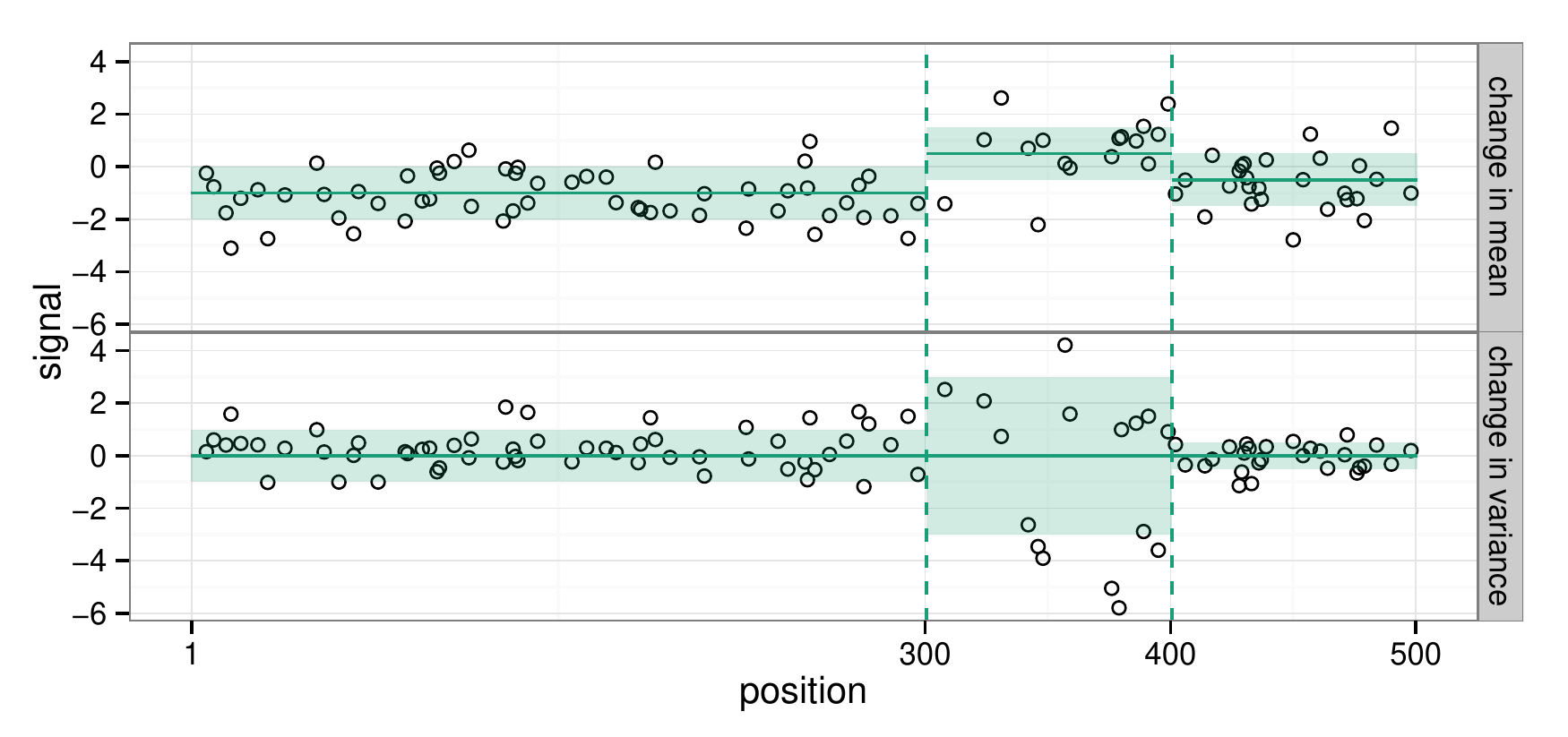}
  \vskip -0.5cm
  \caption{Two signals with the same breakpoints (vertical dashed
    lines) but different distributional changes. Black circles show
    $d=100$ sampled data points drawn from normal distributions
    defined on positions $\mathcal P=\{1, \dots, 500\}$. Horizontal
    line segments and shaded bands show mean $\pm$ one standard
    deviation of the true normal distributions $D_p$. The goal of segmentation
    is to recover the distributions and/or breakpoints, using only the
    sampled data points.}
  \label{fig:motivation}
\end{figure}

\subsection{Maximum likelihood segmentation algorithms}
\label{sec:max-lik}

A segmentation algorithm takes the $d$ sampled data points as input,
and returns a list of estimated distributions and/or breakpoints. In
this section, we will review one class of segmentation algorithms
called maximum likelihood segmentation.

A maximum likelihood segmentation model for multiple breakpoints in
the mean of a normal distribution was proposed by
\citet{statistical-approach}. Let $\mathbf y = \left[
  \begin{array}{ccc}
    y_1 & \cdots & y_d
  \end{array}
\right]^\intercal \in \RR^d$ be the vector formed by the $d$
sampled data points, and let  $\mathbf p = \left[
  \begin{array}{ccc}
    p_1 & \cdots & p_d
  \end{array}
\right]^\intercal \in \mathcal P^d$ be the corresponding vector of
positions, ordered such that $p_1 < \cdots < p_d$. Then for any number
of segments $k\in\{1, \dots, d\}$, the estimated mean vector $\mathbf{\hat
  y}^k\in\RR^d$ is defined as
\begin{equation}
\label{eq:yhat^k}
\begin{aligned}
\mathbf{\hat  y}^k = &\argmin_{\mathbf x \in \RR^d} &&  ||\mathbf y - \mathbf x||^2_2
\\
&\text{subject to} && k-1=\sum_{j=1}^{d-1} 1_{x_j\neq x_{j+1}},
\end{aligned}
\end{equation}
where $||\mathbf x||^2_2=\sum_{j=1}^d x_j^2$ is the squared $\ell_2$
norm. Note that the optimization objective of minimizing the squared
error is equivalent to maximizing the Gaussian likelihood with uniform
variance \citep{statistical-approach}. For a fixed $k_{\text{max}}\leq
d$, we can quickly calculate $\mathbf{\hat y}^k$ for all
$k\in\{1,\dots,k_{\text{max}}\}$ using pruned dynamic programming
\citep{pruned-dp}. For any model size $k$, the estimated variance
$\hat \sigma^2_k\in\RR^+$ is defined as the mean of the squared
residuals:
\begin{equation}
  \label{eq:sigmahat}
  \hat \sigma_k^2 = ||\mathbf y - \mathbf{\hat y}^k||^2_2/d.
\end{equation}

The derivation is similar for the model of multiple breakpoints in the
variance of a normal distribution \citep{lavielle2005}, and can be
computed using the methods of \citet{pelt} or \citet{segmentor}. For
both models, we visually represent the true distribution and estimates
for $k\in\{2, \dots, 5\}$ in Figure~\ref{fig:motivation-modelOnly}.

\begin{figure}[H]
  \centering
  \includegraphics[width=\textwidth]{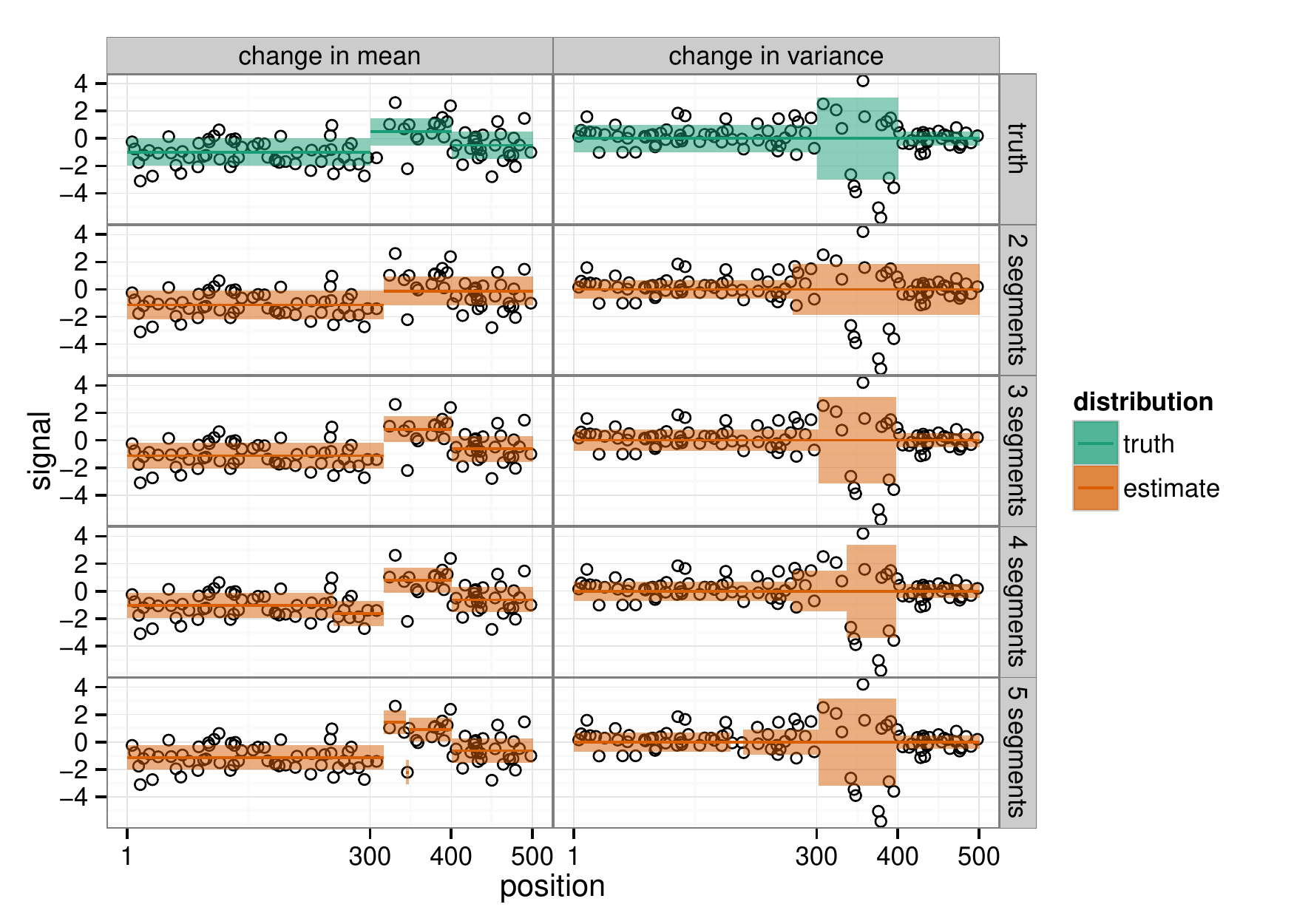}
  \vskip -0.5cm
  \caption{Comparing true and estimated distributions is one approach
    to segmentation model selection/evaluation but is not the subject
    of this paper. \textbf{Top 2 panels}: the same reference/true
    signals as in Figure~\ref{fig:motivation}. \textbf{Others}:
    estimated maximum likelihood models $\mathbf{\hat y}^k,\hat
    \sigma^2_k$ for $k\in\{2, \dots, 5\}$ segments.}
  \label{fig:motivation-modelOnly}
\end{figure}

\subsection{Model selection}

The segmentation model selection problem may be posed as follows. Of
the 4 estimated segmentation models $k\in\{2, \dots, 5\}$, which is
the closest to the true model?

\newpage

One method for segmentation model selection is to compare the true
distribution with the estimated distributions
(Figure~\ref{fig:motivation-modelOnly}), and choose the estimate whose
distribution is closest to the true distribution. Assuming the true
probability distributions $D_p$ are available, one could compare them
with the estimates using a distance function such as the earth mover's
distance \citep{earth-mover}, or some other distance
function. However, the true distribution is not available in practice
on real data, so in this paper we will not explore segmentation model
selection via comparing distributions.

Instead, we propose a method for comparing the true and estimated
breakpoints. For any $d$-vectors of data and positions $(\mathbf x,
\mathbf p)$, we estimate the breakpoint locations using
\begin{equation}
  \label{eq:breaks_phi}
\phi(\mathbf{x}, \mathbf p)
= \big\{
\lfloor 
(p_j+p_{j+1})/2
\rfloor
\text{ for all }j\in\{1,\dots,d-1\}\text{ such that }
\mathbf x_j\neq \mathbf x_{j+1}
\big\}.
\end{equation}
Thus for any model size $k$, we estimate the breakpoint positions
using $\phi(\mathbf{\hat y}^k, \mathbf p)$.
In Figure~\ref{fig:motivation-breaksOnly}, we compare
these estimated breakpoints to the true set of breakpoints
\begin{equation}
  \label{eq:breaks_B}
  B = \{j\in\mathbb B:D_j\neq D_{j+1}\}.
\end{equation}

\begin{figure}[H]
  \centering
  \includegraphics[width=\textwidth]{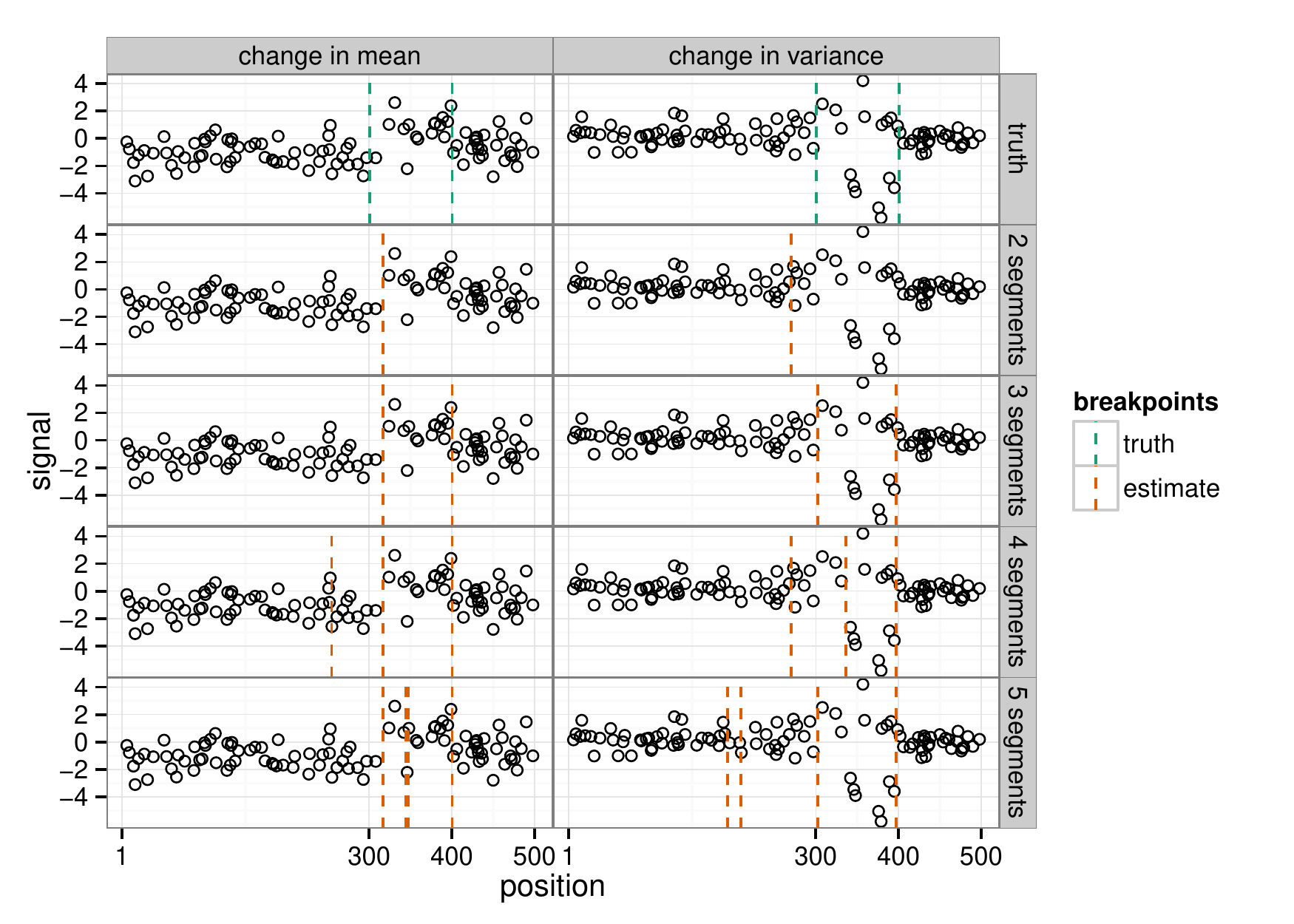}
  \vskip -0.5cm
  \caption{Same as Figure~\ref{fig:motivation-modelOnly}, but showing
    breakpoints instead of distributions. This paper proposes to
    compare the true and estimated breakpoints with the
    breakpointError function, which can be used for any reference/true
    distribution and any type of change. }
  \label{fig:motivation-breaksOnly}
\end{figure}

\newpage

Figure~\ref{fig:motivation-breaksOnly} clearly shows 3 distinct types
of errors that are possible in estimating the breakpoint positions:
\begin{description}
\item[False negative (FN)] for both data sets, the models with 2
segments are suboptimal because they only detect 1 of the 2 true
breakpoints.
\item[False positive (FP)] for both data sets, the models with 4
  segments are suboptimal since they detect 3 rather than 2
  breakpoints. The models with 5 segments are even worse since they
  detect 4 breakpoints.
\item[Imprecision (I)] of the two models with 3 segments, the
  breakpoints estimated for the change in variance data are more
  precise (closer to the true breakpoint positions).
\end{description}

This paper proposes the breakpointError function
(Figure~\ref{fig:motivation-breakpointError}), which can be used to
quantify these intuitive observations. The breakpointError can be
computed to quantify how well a set of estimated breakpoint positions
matches a true or reference set of breakpoints.

\begin{figure}[H]
  \centering
  \input{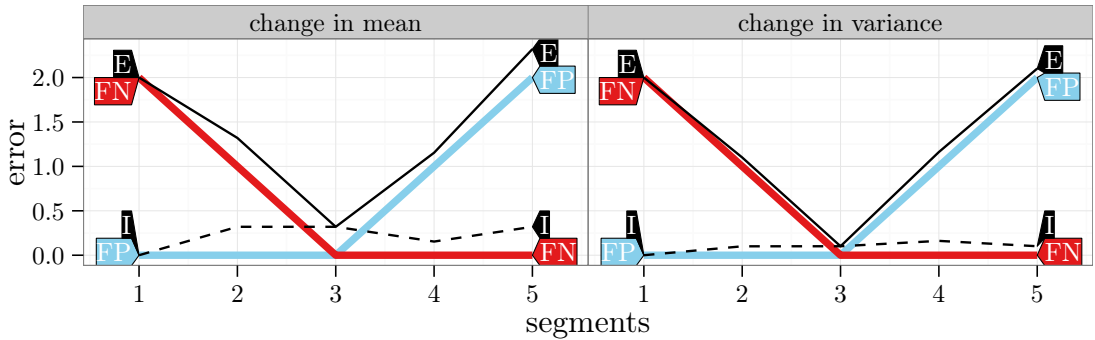}
  \vskip -0.7cm
  \caption{For the same data shown in
    Figure~\ref{fig:motivation-breaksOnly}, we computed the
    breakpointError function (E) and its components. False positives
    (FP) occur when there are more estimated than true breakpoints,
    and false negatives (FN) are the opposite. For the correct model
    size (3 segments = 2 breakpoints), the imprecision function (I)
    quantifies the distance between the true and estimated breakpoint
    positions. The breakpointError is the sum of the other components
    (E=FP+FN+I).}
  \label{fig:motivation-breakpointError}
\end{figure}

\newpage

\section{Related Work}

This paper has been revised and expanded from Chapter~4 of the
doctoral thesis of \citet{HOCKING-phd-ch4}, which has not been
previously published elsewhere. Differences include minor changes
in notation, an expanded introduction, and more complete references.

The main subject of this paper is the breakpointError (defined in
Section~\ref{sec:definition}), which is a function for precisely
measuring the breakpoint detection accuracy of a segmentation
model. There are several other approaches for evaluating segmentation
models. \citet{zaid-lasso} compared the number of detected breakpoints
with the number of true breakpoints, ignoring the positions of the
breakpoints. A more precise method was proposed by
\citet{perf-eval-framework}, who checked if the detected breakpoints
appear in regions of arbitrary size around the true breakpoints. In
contrast, the breakpointError we propose in this paper has no
arbitrary region size parameter. \citet{group-fused} used exact
equality of the estimated and true breakpoint location in their
asymptotic theoretical analysis. The breakpointError function is more
precise since it is able to quantify that a guess close to a true
breakpoint is better than a guess far from a true breakpoint. A final
class of methods uses an annotated region database to quantify false
positive and false negative breakpoint detections
\citep{HOCKING-breakpoints, HOCKING-penalties}. An annotation database
can be created by drawing regions on scatterplots of the data using a
graphical user interface \citep{SegAnnDB}. Evaluating a segmentation
model via annotated regions is similar to the breakpointError function
we propose in this paper, and the precise link between these methods
will be explored in Section~\ref{sec:relaxation}.

Section~\ref{sec:simulations} shows one example application of the
breakpointError function, for determining the optimal form of penalty
functions in segmentation models for simulated data. Many related
penalties have been proposed for the change-point detection
problem. The standard AIC or BIC criteria are not well adapted in this
context since the model collection is exponential
\citep{BM04,BIC,Akaike73,BGH09}, and also because change-points are
discrete parameters \citep{mBIC}.  Many criteria specifically adapted
to change-point models have been proposed.  For example, there are
many different variants of the BIC \citep{Yao88,Lee95,mBIC}, and the
model selection theory of Birg\'e and Massart suggest other penalties
\citep{lavielle2005,lebarbier,BM04,calibration}. The precise
differences between these penalties and the penalties that we find
will be discussed in Section~\ref{sec:simulations}, but the main
difference is that the penalties discussed in this paper are
specifically designed to minimize the breakpointError (rather than
some other function, e.g. the squared error or negative log likelihood
of the data).

\newpage

\section{Definition of the breakpointError}
\label{sec:definition}

Let us recall the notation of Section~1. Assume there are $P$ distinct
positions in a series at which data could be gathered. Depending on
the desired application, these positions could be indices in a data
vector, genomic positions, or time points. Let $\mathcal P=
\{1,\dots,P\}$ be the set of all such positions. For every position
$p\in\mathcal P$, we assume there is some true probability
distribution $D_p$. Let $\mathbb B=\{1,\dots,P-1\}$ be all bases after
which a breakpoint is possible, and let $B = \{p\in \mathbb B : D_p
\neq D_{p+1}\}$ be the set of true breakpoints.

The segmentation algorithm is given a sample of size $d \leq P$ of
data $(p_1, y_1), \dots, (p_d, y_d)$, with positions $p_i\in\mathcal
P$ and noisy observations $y_i\sim D_{p_i}$ for all samples $i\in\{1,
\dots, d\}$. The job of the segmentation algorithm is to return a
breakpoint guess $G\subseteq \mathbb B$. The object of this section is to
define the breakpointError $E^B_{\text{exact}}(G)$, which quantifies
the accuracy of the guess $G$ with respect to the true breakpoints
$B$.

\subsection{Desired properties of the breakpointError function}
\label{sec:desired-properties}

We would like the breakpointError function $E^B_{\text{exact}}:
2^{\mathbb B}\rightarrow \RR^+$ to satisfy the following properties:

\begin{itemize}
\item \textbf{(correctness)} Guessing exactly right costs nothing:
  $E^B_{\text{exact}}(B)=0$.
\item \textbf{(precision)} A guess closer to a real breakpoint is less
  costly:\\if $B=\{b\}$ and $0\leq i<j$, then
  $E^B_{\text{exact}}(\{b+i\})\leq E^B_{\text{exact}}(\{b+j\})$ and
  $E^B_{\text{exact}}(\{b-i\})\leq E^B_{\text{exact}}(\{b-j\})$.
\item \textbf{(FP)} False positive breakpoints are
  bad: if $b\in B$ and $g\not\in B$, then $E^B_{\text{exact}} (\{b\}) <
  E^B_{\text{exact}} (\{b,g\})$.
\item \textbf{(FN)} Undiscovered breakpoints are bad:
  $b\in B\Rightarrow E^B_{\text{exact}}(\{b\}) < E^B_{\text{exact}} (\emptyset)$.
\end{itemize}

In the next section we define the breakpointError, which satisfies all
4 properties.

\newpage

\subsection{Definition of the breakpointError function}
\label{sec:breakpoint_error}

In this section, we use the exact breakpoint locations $B=\{B_1,
\dots, B_n\}$ to define the breakpointError function.

We define the error of a breakpoint location guess $g\in\mathbb
B$ as a function of the closest breakpoint in $B$. So
first we put the breaks in order, by writing them as $B_1<\cdots<
B_n$. Then, we define a set of intervals
$R_B=\{\r_1,\dots,\r_n\}$ that form a partition of $\mathbb B$. For each
breakpoint $B_i$ we define the region
${\r}_i=[\rileft,\riright]\in\mathbb I \mathbb B$, where $\mathbb
I\mathbb B\subset 2^{\mathbb B}$ denotes the set of all intervals of
$\mathbb B$. We take the notation conventions from the interval
analysis literature \citep{intervals}.

We define the upper limit of region $i$ as
\begin{equation}
  \label{eq:R_i}
\riright
=
  \begin{cases}
    P-1 & \text{if } i=n \\
    \lfloor (B_{i+1}+B_i)/2 \rfloor & \text{otherwise}
  \end{cases}
\end{equation}
and
the lower limit as
\begin{equation}
  \label{eq:L_i}
  \rileft =
  \begin{cases}
    1 & \text{if } i=1 \\
    \riright[i-1]+1 & \text{otherwise}.
  \end{cases}
\end{equation}

The breakpoints $B_i$ and regions $\r_i$ are labeled for a small
signal in Figure~\ref{fig:exact_imprecision}.

\begin{figure}[H]
  \centering
  \input{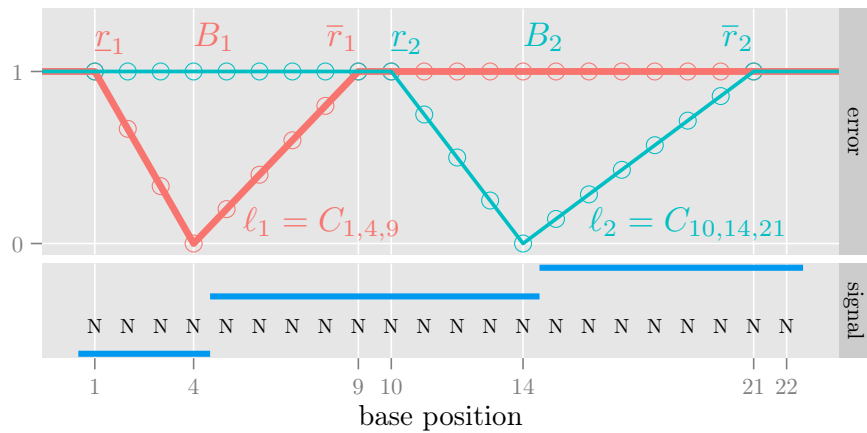}
  \vskip -0.5cm
  \caption{For a small signal with 2 breakpoints, and for breakpoints
    $i\in\{1, 2\}$, we plot the $\ell_i$ functions that measure the
    precision of a guess in $\mathbf r_i = [\underline r_i, \overline
    r_i]$.  The blue signal $\mathbf m\in\RR^{22}$ has 2 breakpoints:
    $B=\{4,14\}$. To emphasize the discrete nature of the data, N is
    drawn at each of the $P=22$ distinct positions at which data
    could be gathered.}
  \label{fig:exact_imprecision}
\end{figure}

Intuitively, if we observe a breakpoint guess $g\in \r_i$, then its
closest breakpoint is $B_i$. To define the best guess in each region,
we use piecewise affine functions $C_{\underline r,b,\overline
  r}:\RR\rightarrow[0,1]$ defined as follows:
\begin{equation}
  \label{eq:cLxR}
  C_{\underline r,b,\overline r}(g) =
  \begin{cases}
    0 & \text{if }g=b \\
    (b-g)/(x-\underline r) & \text{if } \underline r< g< b \\
    (g-b)/(\overline r-x) & \text{if } b< g< \overline r\\
    1 & \text{ otherwise}.
  \end{cases}
\end{equation}
For each breakpoint $i$ we measure the precision of a guess
$g\in\mathbb B$ using
\begin{equation}
  \label{eq:ell_i_exact}
  \ell_i(g)=C_{\rileft,B_i,\riright}(g).
\end{equation}
These piecewise affine functions are shown in
Figure~\ref{fig:exact_imprecision} for a small signal with 2
breakpoints. Note that there is some degree of arbitrary choice in the
definition of the $\ell_i$ functions. For example, properly defined
piecewise quadratic functions could also satisfy the
\textbf{precision} property desired of the breakpointError
(Section~\ref{sec:desired-properties}).

Now, we are ready to define the exact breakpointError of a set of
guesses $G\subseteq\mathbb B$.
First, let $G \cap\r$ be the subset of guesses $G$ that fall in
region~$\r$. 

Then, we define the false negative rate for region $\r$ as 
\begin{equation}
  \label{eq:FN_i}
  \text{FN}(G,\r) = 
  \begin{cases}
    1 & \text{if } G\cap\r = \emptyset\\
    0 & \text{otherwise}
  \end{cases}
\end{equation}
and the false positive rate for region $\r$ as
\begin{equation}
  \label{eq:FP_i}
  \text{FP}(G,\r) =
  \begin{cases}
    0 & \text{if }G\cap\r = \emptyset\\
    |G\cap\r|-1 &\text{otherwise}
  \end{cases}
\end{equation}
and the imprecision of the best guess in region $r$ as
\begin{equation}
  \label{eq:imprecision}
  I(G,\r,\ell) =
  \begin{cases}
    0 & \text{if } G\cap\r = \emptyset\\
    \min_{g\in G\cap\r} \ell(g) & \text{otherwise}.
  \end{cases}
\end{equation}
When there are no breakpoints, we have $B=\emptyset$ and
$R_B=\emptyset$. But we still would like to quantify the false
positives, so let $G\setminus\big( \cup R_B\big) $ be the set of
guesses $G$ outside of the breakpoint regions $R_B$. 

\begin{definition}
  \label{def:exact_breakpoint_cost}
  The \textbf{breakpointError} of set of breakpoint guesses $G$ with
  respect to the true breakpoints $B$ is the sum of the False
  Positive, False Negative, and Imprecision functions:

\begin{equation*}
  {E }_{\text{exact}}^B(G) =
  \big|G\setminus(\cup R_B)\big|
 + \sum_{i=1}^{|B|}\textrm{FP}(G,\r_i)+\textrm{FN}(G,\r_i)+I(G,\r_i,\ell_i).
\end{equation*}
\end{definition}

\subsection{Implementation}

To compute the exact breakpointError, we first sort lists of $n=|B|$
and $m=|G|$ items. Using the quicksort algorithm, this requires
$O(n\log n + m\log m)$ operations in the average case
\citep{clrs}. Once sorted, the components of the cost can be computed
in linear time $O(n + m)$. So, overall the computation of the error
can be accomplished in best case $O(n + m)$, average case $O(n\log n +
m\log m)$ operations. Its computation is implemented in efficient C
code in the \verb|breakpointError| R package on R-Forge, which can be
installed in R using

\begin{verbatim}
install.packages("breakpointError", repos="http://r-forge.r-project.org")
\end{verbatim}

\newpage

\section{Penalties with minimal breakpointError in simulations}
\label{sec:simulations}
In this section, we show several examples of how to use the
breakpointError function to determine penalties which minimize the
train and test breakpointError in simulated data sets. In all cases,
we will assume that there is a database of several piecewise constant
signals with Gaussian noise. The goal is to learn a penalty constant
that can be shared between signals with different properties. In each
of the following sections, we will first present an empirical analysis
of several simulated signals using the breakpointError. Then, we will
discuss the relationship of our results to relevant theoretical
results.

\subsection{Sampling density normalization}
\label{variable_density}
The first problem we consider is finding a penalty that is invariant
to sampling density. This is important because sampling density is
often not uniform in real data sets. In fact, we see a sampling
density between 40 and 4400 kilobases per probe in the neuroblastoma
data set of \citet{HOCKING-breakpoints}. We would like to construct a
single algorithm or penalty function that can be used for each of
these segmentation problems.

So to determine the form of the penalty function that can best adapt
to this variation, we analyze the following simulation. We create a
true piecewise constant signal $\mathbf m\in\RR^P$ over $P=70000$ base
pairs, with breakpoints every 10000 base pairs, shown as the blue line
in Figure~\ref{fig:variable-density-signals}. Then, we define a signal
sample size $d_i\in\{70,\dots,70000\}$ for every noisy signal
$i\in\{1,\dots,z\}$. Let $y_i\in\RR^{d_i}$ be noisy signal $i$,
sampled at positions $p_i\in\mathcal P^{d_i}$, with
$p_{i1}<\cdots<p_{i,d_i}$. We sample every probe $j$ from the
$y_{ij}\sim N(m_{p_{ij}},1)$ distribution. These samples are shown as
the black points in Figure~\ref{fig:variable-density-signals}.

We would like to learn some model complexity parameter $\lambda$ on
the first noisy signal, and use it for accurate breakpoint detection
on the second noisy signal. In other words, we are looking for a model
selection criterion which is invariant to sampling density. 

\begin{figure}[H]
\includegraphics[width=\linewidth]{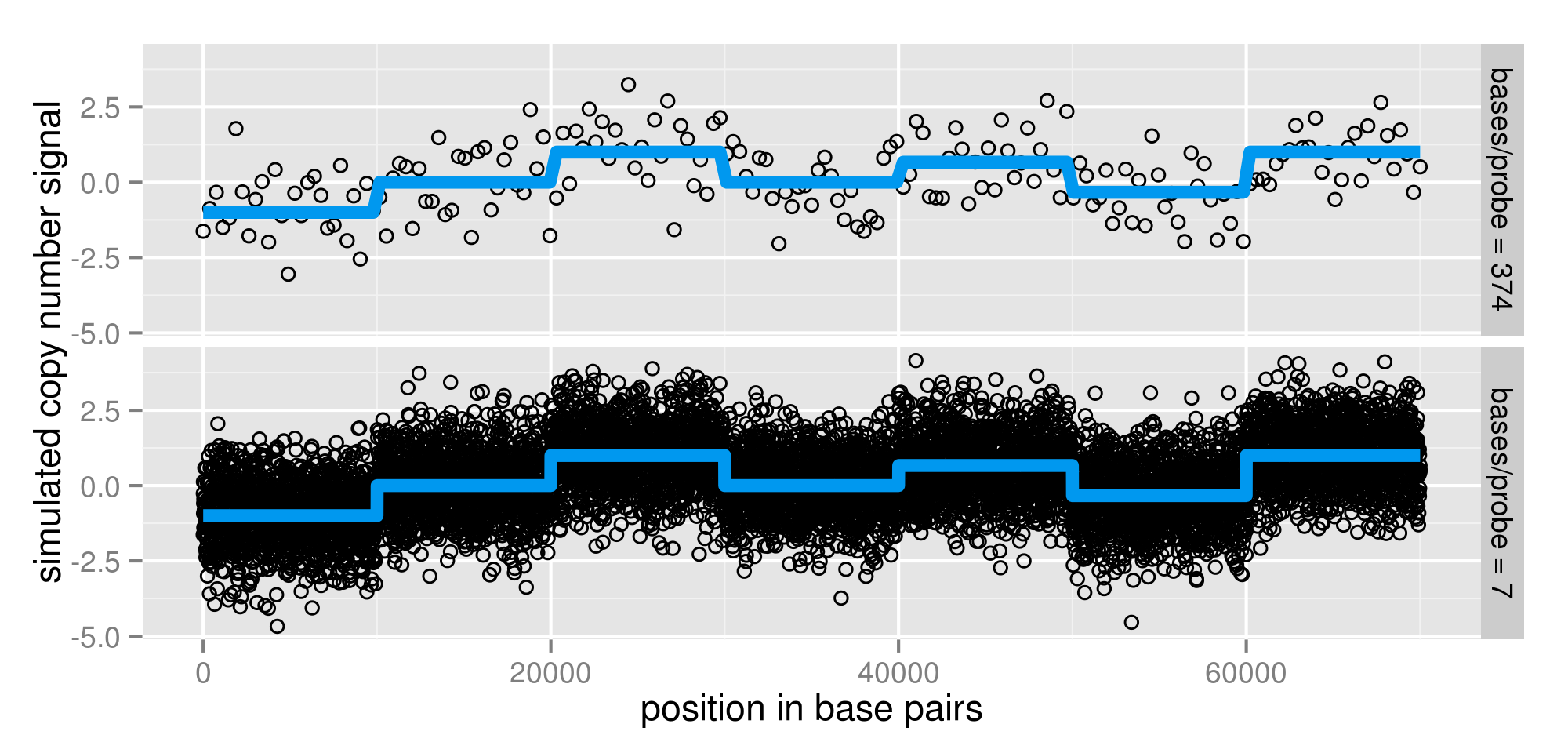}
\vskip -0.3cm
  \caption{Two noisy signals (black) sampled from
  a true piecewise constant signal (blue). 
Note that these are the same signals that appear in
  Figure~\ref{fig:variable-density-annotation-cost}.}
\label{fig:variable-density-signals}
\end{figure}

\newpage

To attack this problem, we proceed as follows. For every signal $i$,
we use pruned dynamic programming to calculate the maximum likelihood
estimator $\mathbf{\hat y}^k_i\in\RR^{d_i}$ (\ref{eq:yhat^k}), for several model sizes
$k\in\{1,\dots,k_{\text{max}}\}$ \citep{pruned-dp}. Then, we define
the model selection criteria
\begin{equation}
  \label{eq:kstar_density}
  k^\alpha_i(\lambda) =\argmin_k \lambda k d_i^\alpha + 
  ||\mathbf{y_i}-\mathbf{\hat y}^k_i||_2^2.
\end{equation}
Each of these is a function $k_i^\alpha:\RR^+\rightarrow
\{1,\dots,k_{\text{max}}\}$ that takes a model complexity tradeoff
parameter $\lambda$ and returns the optimal number of segments for
signal $i$. The goal is to find a penalty exponent $\alpha\in\RR$ that
lets us generalize $\lambda$ between different signals $i$.

To quantify the accuracy of a segmentation for signal $i$, let
$\text{BErr}_i(k)$ be the breakpointError of the model with
$k$ segments. This is a function
$\text{BErr}_i:\{1,\dots,k_{\text{max}}\}\rightarrow\RR^+$, defined as
\begin{equation}
  \label{eq:berr}
  \text{BErr}_i(k) = E_{\text{exact}}^{B}\left[
\phi(\mathbf{\hat y}_i^k,p_i)
\right].
\end{equation}
where $B$ is the set of real breakpoints in the true piecewise constant
signal $\mathbf m$.

In Figure~\ref{fig:variable-density-berr-k}, we plot $\text{BErr}_i$
for the 2 simulated signals $i$ shown previously.  As expected, the
model recovers more accurate breakpoints from the signal sampled at a
higher density.

  \begin{figure}[H]
    \hskip -1cm
    \input{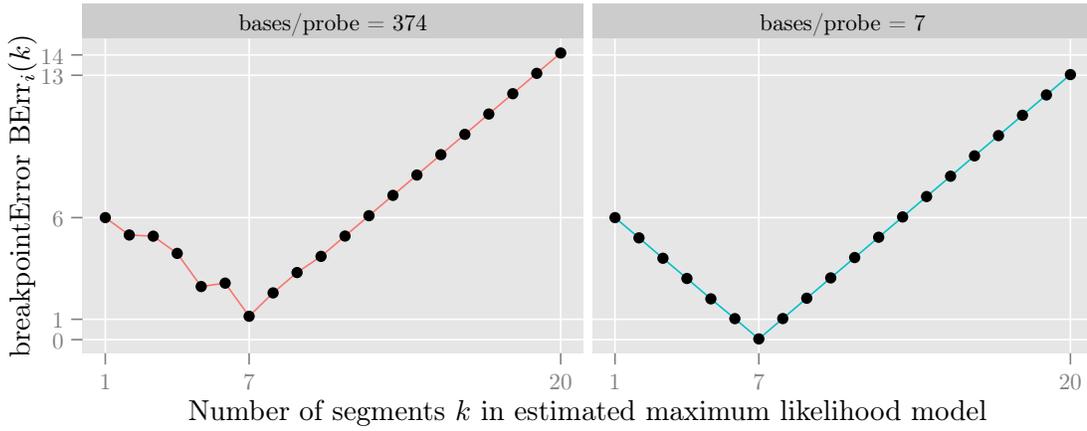}
    \caption{Exact breakpoint error
  $\text{BErr}_i(k)$ for two signals $i$ and several model
  sizes $k$. Note that these are the same error curves that appear
in the Breakpoint panels of Figure~\ref{fig:variable-density-sigerr-small}.}
    \label{fig:variable-density-berr-k}
  \end{figure}

\newpage

Now, let us define the penalized
model breakpoint error $E^\alpha_i:\RR^+\rightarrow\RR^+$ as
\begin{equation}
  \label{eq:lerr}
E^\alpha_i(\lambda) = \text{BErr}_i\left[
k^\alpha_i(\lambda)
\right].
\end{equation}
In Figure~\ref{fig:variable-density-berr}, we plot these functions for the
two signals $i$ shown previously, and for several penalty exponents $\alpha$.

The dots in Figure~\ref{fig:variable-density-berr} show the optimal
$\lambda$ found by minimizing the penalized model breakpoint detection
error:
\begin{equation}
  \label{eq:lambda_hat}
  \hat \lambda^\alpha_i = \argmin_{\lambda\in\RR^+}  E^\alpha_i(\lambda)
\end{equation}

Figure~\ref{fig:variable-density-berr} suggests that $\alpha\approx1/2$
defines a penalty with aligned error curves, which will result in
$\hat \lambda_i^\alpha$ values that can be generalized between
profiles. 

  \begin{figure}[H]
    \hskip -1cm
    \input{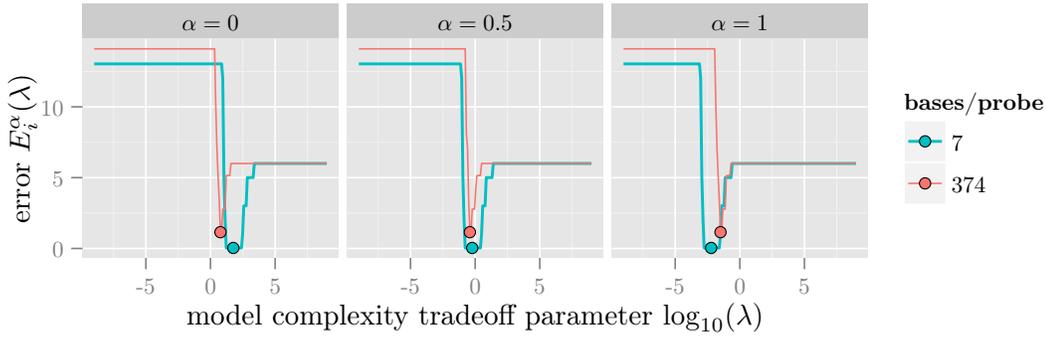}
    \caption{Model selection error curves
  $E_i^\alpha(\lambda)$ for 2 signals $i$ and several exponents
  $\alpha$. The penalty contains a term for the number of points sampled $d_i^\alpha$.}
    \label{fig:variable-density-berr}
  \end{figure}

\newpage

Now, we are ready to define 2 quantities that will be able to help us
choose an optimal penalty exponent $\alpha$.

First, let us consider the training error over the entire database:
\begin{equation}
  \label{eq:lerr_train}
  E^\alpha(\lambda) = \sum_{i=1}^z E_i^\alpha(\lambda),
\end{equation}
and we define the minimal value of this function as
\begin{equation}
  \label{eq:lerr_train_min}
  E^*(\alpha) = \min_\lambda E^\alpha(\lambda).
\end{equation}
In Figure~\ref{fig:variable-density-error-train}, we plot these
training error functions $E^\alpha$ and their minimal values $E^*$ for
several values of $\alpha$. It is clear that the minimum training
error is found for some penalty exponent $\alpha$ near 1/2, and we
would like to find the precise $\alpha$ that results in the lowest
possible minimum $E^*(\alpha)$.

  \begin{figure}[H]
    \hskip -1cm
    \input{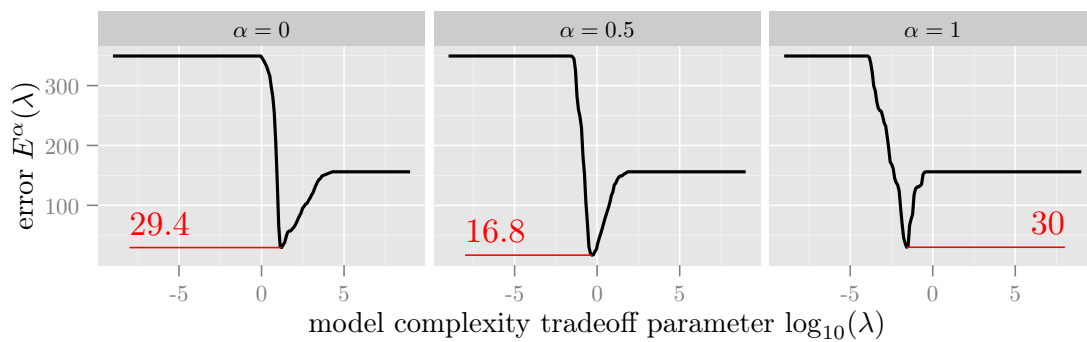}
    \caption{Training error functions $E^\alpha$
  in black and their minimal values $E^*(\alpha)$ in red. The penalty
  contains a term for the number of points sampled $d_i^\alpha$.}
    \label{fig:variable-density-error-train}
  \end{figure}

\newpage

We also consider the test error over all pairs of signals when
training on one and testing on another:
\begin{equation}
  \label{eq:lerr_test}
  \text{TestErr}(\alpha) = 
  \sum_{i\neq j} E^\alpha_i(\hat \lambda_j^\alpha).
\end{equation}

In Figure~\ref{fig:variable-density-error-alpha}, we plot $E^*$ and
TestErr for a grid of $\alpha$ values.  It is clear that the optimal
penalty is given by $\alpha=1/2$. This corresponds to the following
model selection criterion which is invariant to the number of data
points sampled $d_i$ (for different simulated signals $i$ with the
same true breakpoints):
\begin{equation}
  \label{eq:var_dens_opt_pen}
  k_i(\lambda) = 
  \argmin_k \lambda k \sqrt{d_i}+||\mathbf y_i-\mathbf{\hat y}_i^k||^2_2
\end{equation}

  \begin{figure}[H]
    \hskip -1cm
\begin{tikzpicture}[x=1pt,y=1pt]
\definecolor[named]{fillColor}{rgb}{1.00,1.00,1.00}
\path[use as bounding box,fill=fillColor,fill opacity=0.00] (0,0) rectangle (433.62,252.94);
\begin{scope}
\path[clip] (  0.00,  0.00) rectangle (433.62,252.94);
\definecolor[named]{drawColor}{rgb}{1.00,1.00,1.00}
\definecolor[named]{fillColor}{rgb}{1.00,1.00,1.00}

\path[draw=drawColor,line width= 0.6pt,line join=round,line cap=round,fill=fillColor] (  0.00,  0.00) rectangle (433.62,252.94);
\end{scope}
\begin{scope}
\path[clip] ( 42.84,139.09) rectangle (407.16,239.70);
\definecolor[named]{fillColor}{rgb}{0.90,0.90,0.90}

\path[fill=fillColor] ( 42.84,139.09) rectangle (407.16,239.70);
\definecolor[named]{drawColor}{rgb}{0.95,0.95,0.95}

\path[draw=drawColor,line width= 0.3pt,line join=round] ( 42.84,150.85) --
	(407.16,150.85);

\path[draw=drawColor,line width= 0.3pt,line join=round] ( 42.84,185.60) --
	(407.16,185.60);

\path[draw=drawColor,line width= 0.3pt,line join=round] ( 42.84,220.35) --
	(407.16,220.35);

\path[draw=drawColor,line width= 0.3pt,line join=round] (114.60,139.09) --
	(114.60,239.70);

\path[draw=drawColor,line width= 0.3pt,line join=round] (225.00,139.09) --
	(225.00,239.70);

\path[draw=drawColor,line width= 0.3pt,line join=round] (335.40,139.09) --
	(335.40,239.70);
\definecolor[named]{drawColor}{rgb}{1.00,1.00,1.00}

\path[draw=drawColor,line width= 0.6pt,line join=round] ( 42.84,168.23) --
	(407.16,168.23);

\path[draw=drawColor,line width= 0.6pt,line join=round] ( 42.84,202.97) --
	(407.16,202.97);

\path[draw=drawColor,line width= 0.6pt,line join=round] ( 42.84,237.72) --
	(407.16,237.72);

\path[draw=drawColor,line width= 0.6pt,line join=round] ( 59.40,139.09) --
	( 59.40,239.70);

\path[draw=drawColor,line width= 0.6pt,line join=round] (169.80,139.09) --
	(169.80,239.70);

\path[draw=drawColor,line width= 0.6pt,line join=round] (280.20,139.09) --
	(280.20,239.70);

\path[draw=drawColor,line width= 0.6pt,line join=round] (390.60,139.09) --
	(390.60,239.70);
\definecolor[named]{drawColor}{rgb}{0.00,0.00,0.00}

\path[draw=drawColor,line width= 0.6pt,line join=round] ( 59.40,194.92) --
	(114.60,179.37) --
	(169.80,153.94) --
	(180.84,147.95) --
	(191.88,144.95) --
	(202.92,144.10) --
	(213.96,143.66) --
	(219.48,144.35) --
	(225.00,145.13) --
	(230.52,145.13) --
	(236.04,145.13) --
	(247.08,145.13) --
	(258.12,145.14) --
	(269.16,147.90) --
	(280.20,154.30) --
	(291.24,165.30) --
	(302.28,182.50) --
	(313.32,201.00) --
	(324.36,212.96) --
	(335.40,220.55) --
	(390.60,235.12);
\end{scope}
\begin{scope}
\path[clip] ( 42.84, 35.17) rectangle (407.16,135.78);
\definecolor[named]{fillColor}{rgb}{0.90,0.90,0.90}

\path[fill=fillColor] ( 42.84, 35.17) rectangle (407.16,135.78);
\definecolor[named]{drawColor}{rgb}{0.95,0.95,0.95}

\path[draw=drawColor,line width= 0.3pt,line join=round] ( 42.84, 57.15) --
	(407.16, 57.15);

\path[draw=drawColor,line width= 0.3pt,line join=round] ( 42.84, 91.96) --
	(407.16, 91.96);

\path[draw=drawColor,line width= 0.3pt,line join=round] ( 42.84,126.77) --
	(407.16,126.77);

\path[draw=drawColor,line width= 0.3pt,line join=round] (114.60, 35.17) --
	(114.60,135.78);

\path[draw=drawColor,line width= 0.3pt,line join=round] (225.00, 35.17) --
	(225.00,135.78);

\path[draw=drawColor,line width= 0.3pt,line join=round] (335.40, 35.17) --
	(335.40,135.78);
\definecolor[named]{drawColor}{rgb}{1.00,1.00,1.00}

\path[draw=drawColor,line width= 0.6pt,line join=round] ( 42.84, 39.74) --
	(407.16, 39.74);

\path[draw=drawColor,line width= 0.6pt,line join=round] ( 42.84, 74.55) --
	(407.16, 74.55);

\path[draw=drawColor,line width= 0.6pt,line join=round] ( 42.84,109.36) --
	(407.16,109.36);

\path[draw=drawColor,line width= 0.6pt,line join=round] ( 59.40, 35.17) --
	( 59.40,135.78);

\path[draw=drawColor,line width= 0.6pt,line join=round] (169.80, 35.17) --
	(169.80,135.78);

\path[draw=drawColor,line width= 0.6pt,line join=round] (280.20, 35.17) --
	(280.20,135.78);

\path[draw=drawColor,line width= 0.6pt,line join=round] (390.60, 35.17) --
	(390.60,135.78);
\definecolor[named]{fillColor}{rgb}{0.20,0.20,0.20}

\path[fill=fillColor,fill opacity=0.50] ( 59.40,125.89) --
	(114.60,119.45) --
	(169.80, 95.29) --
	(180.84, 80.63) --
	(191.88, 66.63) --
	(202.92, 61.28) --
	(213.96, 56.13) --
	(219.48, 53.88) --
	(225.00, 53.95) --
	(230.52, 54.45) --
	(236.04, 56.14) --
	(247.08, 72.65) --
	(258.12, 92.03) --
	(269.16,102.93) --
	(280.20,109.21) --
	(291.24,114.94) --
	(302.28,118.98) --
	(313.32,122.15) --
	(324.36,124.96) --
	(335.40,126.73) --
	(390.60,131.20) --
	(390.60, 56.52) --
	(335.40, 46.44) --
	(324.36, 44.20) --
	(313.32, 40.89) --
	(302.28, 39.74) --
	(291.24, 39.74) --
	(280.20, 39.74) --
	(269.16, 39.74) --
	(258.12, 39.74) --
	(247.08, 39.74) --
	(236.04, 39.74) --
	(230.52, 39.74) --
	(225.00, 39.74) --
	(219.48, 39.74) --
	(213.96, 39.74) --
	(202.92, 39.74) --
	(191.88, 39.74) --
	(180.84, 39.74) --
	(169.80, 39.74) --
	(114.60, 49.69) --
	( 59.40, 58.26) --
	cycle;
\definecolor[named]{drawColor}{rgb}{0.00,0.00,0.00}

\path[draw=drawColor,line width= 0.6pt,line join=round] ( 59.40, 92.07) --
	(114.60, 84.57) --
	(169.80, 65.98) --
	(180.84, 57.34) --
	(191.88, 51.20) --
	(202.92, 48.38) --
	(213.96, 46.22) --
	(219.48, 45.07) --
	(225.00, 44.87) --
	(230.52, 44.83) --
	(236.04, 45.42) --
	(247.08, 51.29) --
	(258.12, 58.97) --
	(269.16, 64.84) --
	(280.20, 69.11) --
	(291.24, 74.01) --
	(302.28, 77.93) --
	(313.32, 81.52) --
	(324.36, 84.58) --
	(335.40, 86.59) --
	(390.60, 93.86);
\end{scope}
\begin{scope}
\path[clip] (  0.00,  0.00) rectangle (433.62,252.94);
\definecolor[named]{drawColor}{rgb}{0.50,0.50,0.50}

\node[text=drawColor,anchor=base east,inner sep=0pt, outer sep=0pt, scale=  0.87] at ( 35.73,164.94) {50};

\node[text=drawColor,anchor=base east,inner sep=0pt, outer sep=0pt, scale=  0.87] at ( 35.73,199.68) {100};

\node[text=drawColor,anchor=base east,inner sep=0pt, outer sep=0pt, scale=  0.87] at ( 35.73,234.43) {150};
\end{scope}
\begin{scope}
\path[clip] (  0.00,  0.00) rectangle (433.62,252.94);
\definecolor[named]{drawColor}{rgb}{0.50,0.50,0.50}

\path[draw=drawColor,line width= 0.6pt,line join=round] ( 38.58,168.23) --
	( 42.84,168.23);

\path[draw=drawColor,line width= 0.6pt,line join=round] ( 38.58,202.97) --
	( 42.84,202.97);

\path[draw=drawColor,line width= 0.6pt,line join=round] ( 38.58,237.72) --
	( 42.84,237.72);
\end{scope}
\begin{scope}
\path[clip] (  0.00,  0.00) rectangle (433.62,252.94);
\definecolor[named]{drawColor}{rgb}{0.50,0.50,0.50}

\node[text=drawColor,anchor=base east,inner sep=0pt, outer sep=0pt, scale=  0.87] at ( 35.73, 36.45) {0};

\node[text=drawColor,anchor=base east,inner sep=0pt, outer sep=0pt, scale=  0.87] at ( 35.73, 71.26) {5};

\node[text=drawColor,anchor=base east,inner sep=0pt, outer sep=0pt, scale=  0.87] at ( 35.73,106.07) {10};
\end{scope}
\begin{scope}
\path[clip] (  0.00,  0.00) rectangle (433.62,252.94);
\definecolor[named]{drawColor}{rgb}{0.50,0.50,0.50}

\path[draw=drawColor,line width= 0.6pt,line join=round] ( 38.58, 39.74) --
	( 42.84, 39.74);

\path[draw=drawColor,line width= 0.6pt,line join=round] ( 38.58, 74.55) --
	( 42.84, 74.55);

\path[draw=drawColor,line width= 0.6pt,line join=round] ( 38.58,109.36) --
	( 42.84,109.36);
\end{scope}
\begin{scope}
\path[clip] (407.16,139.09) rectangle (420.37,239.70);
\definecolor[named]{fillColor}{rgb}{0.80,0.80,0.80}

\path[fill=fillColor] (407.16,139.09) rectangle (420.37,239.70);
\definecolor[named]{drawColor}{rgb}{0.00,0.00,0.00}

\node[text=drawColor,rotate=270.00,anchor=base,inner sep=0pt, outer sep=0pt, scale=  0.87] at (410.48,189.39) {train};
\end{scope}
\begin{scope}
\path[clip] (407.16, 35.17) rectangle (420.37,135.78);
\definecolor[named]{fillColor}{rgb}{0.80,0.80,0.80}

\path[fill=fillColor] (407.16, 35.17) rectangle (420.37,135.78);
\definecolor[named]{drawColor}{rgb}{0.00,0.00,0.00}

\node[text=drawColor,rotate=270.00,anchor=base,inner sep=0pt, outer sep=0pt, scale=  0.87] at (410.48, 85.47) {test};
\end{scope}
\begin{scope}
\path[clip] (  0.00,  0.00) rectangle (433.62,252.94);
\definecolor[named]{drawColor}{rgb}{0.50,0.50,0.50}

\path[draw=drawColor,line width= 0.6pt,line join=round] ( 59.40, 30.90) --
	( 59.40, 35.17);

\path[draw=drawColor,line width= 0.6pt,line join=round] (169.80, 30.90) --
	(169.80, 35.17);

\path[draw=drawColor,line width= 0.6pt,line join=round] (280.20, 30.90) --
	(280.20, 35.17);

\path[draw=drawColor,line width= 0.6pt,line join=round] (390.60, 30.90) --
	(390.60, 35.17);
\end{scope}
\begin{scope}
\path[clip] (  0.00,  0.00) rectangle (433.62,252.94);
\definecolor[named]{drawColor}{rgb}{0.50,0.50,0.50}

\node[text=drawColor,anchor=base,inner sep=0pt, outer sep=0pt, scale=  0.87] at ( 59.40, 21.48) {-1};

\node[text=drawColor,anchor=base,inner sep=0pt, outer sep=0pt, scale=  0.87] at (169.80, 21.48) {0};

\node[text=drawColor,anchor=base,inner sep=0pt, outer sep=0pt, scale=  0.87] at (280.20, 21.48) {1};

\node[text=drawColor,anchor=base,inner sep=0pt, outer sep=0pt, scale=  0.87] at (390.60, 21.48) {2};
\end{scope}
\begin{scope}
\path[clip] (  0.00,  0.00) rectangle (433.62,252.94);
\definecolor[named]{drawColor}{rgb}{0.00,0.00,0.00}

\node[text=drawColor,anchor=base,inner sep=0pt, outer sep=0pt, scale=  1.09] at (225.00,  9.94) {penalty exponent $\alpha$};
\end{scope}
\begin{scope}
\path[clip] (  0.00,  0.00) rectangle (433.62,252.94);
\definecolor[named]{drawColor}{rgb}{0.00,0.00,0.00}

\node[text=drawColor,rotate= 90.00,anchor=base,inner sep=0pt, outer sep=0pt, scale=  1.09] at ( 18.16,137.43) {total error relative to true breakpoints (breakpointError)};
\end{scope}
\end{tikzpicture}
    \caption{Train and test breakpoint detection
  error as a function of penalty exponent $\alpha$. The penalty
  contains a term for the number of points sampled $d_i^\alpha$. Mean
  error is drawn as a black line, with one standard deviation shown as
  a grey band.}
    \label{fig:variable-density-error-alpha}
  \end{figure}
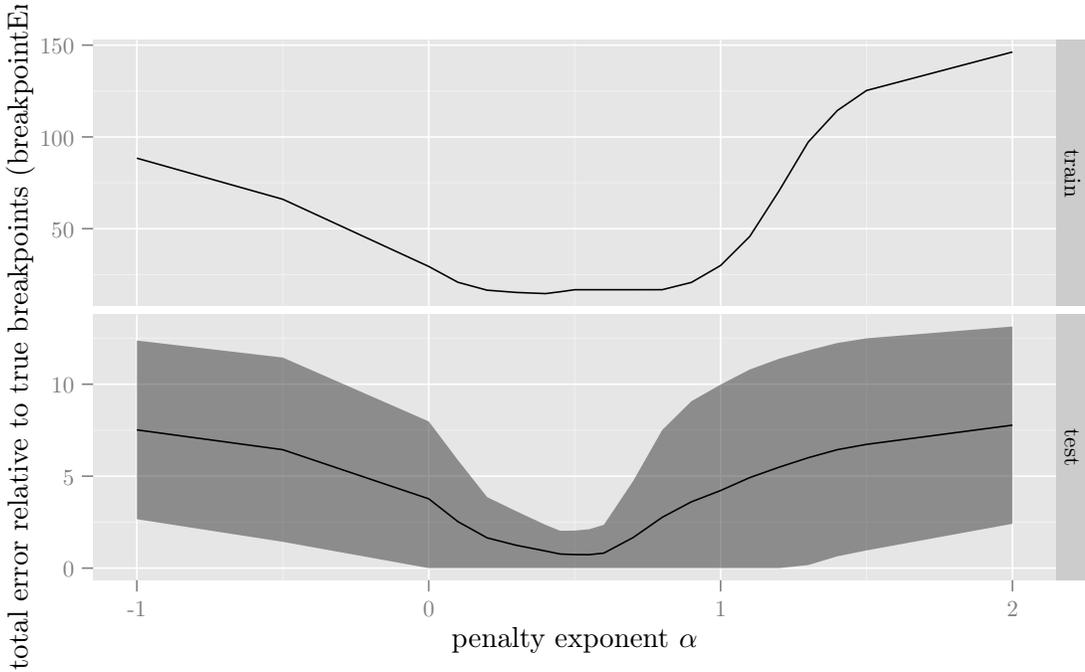

\newpage

As explained by \citet{sylvain-survey}, a model selection procedure
can be either efficient or consistent. An efficient procedure for
model estimation accurately recovers the true piecewise constant signal, whereas a
consistent procedure for model identification accurately recovers the
breakpoints. Since we attempt to minimize the breakpointError, we are
attempting to construct a consistent penalty, not an efficient
penalty.

In general terms, the fact that we find a nonzero exponent $\alpha$
for our $d_i^\alpha$ penalty term agrees with other results. In
particular, \citet{vfold} proposed an optimal procedure to select model
complexity parameters in cross-validation by normalizing by the sample
size $d_i$. 

The $\sqrt{d_i}$ term that we find here using simulations is in
agreement with \citet{aurelie}, who used finite sample model selection
theory to find a $\sqrt{d_i}$ term in a penalty optimal for
clustering.

When theoretically deriving an efficient penalty for segmentation
model estimation in the non-asymptotic setting, \citet{lebarbier}
obtained a $\log d_i$ term. This contrasts our result, which attempts
to find a consistent penalty, and uses the breakpointError to find a
$\sqrt{d_i}$ penalty term. But in fact this is in agreement with
classical results that the efficient AIC underpenalizes with respect
to the consistent BIC, as shown in Table~\ref{tab:AIC-BIC}.

\begin{table}[H]
  \centering
  \begin{tabular}{cc|cc}
     Efficient & Penalty & Consistent & Penalty \\
     Model & Term & Model & Term\\
     \hline
     AIC & 2 & BIC & $\log d_i$\\
     Lebarbier & $\log d_i$ & This work & $\sqrt{d_i}$\\
  \end{tabular}
  \caption{Comparing our results with Lebarbier, 
in the context of classical results involving AIC and BIC. 
The BIC is designed for model identification and penalizes more than the AIC.
Likewise, our penalty examines model identification using the breakpoint
detection error, and penalizes more than the efficient penalty proposed
by Lebarbier.}
  \label{tab:AIC-BIC}
\end{table}

\newpage
\subsection{Signal length normalization}
\label{variable_size}
In real array CGH data, we need to analyze chromosomes of varying length
in base pairs. For example, human chromosome 1 is the largest at about
250 mega base pairs, and chromosome 22 is the smallest with only about
36 mega base pairs. But we expect that the number of breakpoints is
proportional to the length of the chromosome in base pairs, and we would
like to design a model selection criterion that is invariant to the
signal length.

So as a first step toward constructing a penalty that is invariant to
the number of breakpoints, we consider the following simulation where
we fix the number of points sampled at $d_i=2000$, and vary the length
of the signal sampled. In
Figure~\ref{fig:variable-breaks-constant-size}, we show samples of 2
different lengths $l_i$, for the same true piecewise constant signal
$\mathbf m$. This simulation is somewhat unrealistic since the number
of data points $d_i$ in real data sets is usually proportional to the
signal length $l_i$. We will consider a more realistic simulation
model and a more complicated penalty in the next section.

\figpdf{variable-breaks-constant-size}{Samples of 2 different lengths
  $l_i$ but constant number of points $d=2000$.}

\newpage

For each signal $i$, we define the penalty
\begin{equation}
  \label{eq:kstar_length}
  k_i^\beta(\lambda) = \argmin_k \lambda k l_i^\beta
+ ||\mathbf y_i - \mathbf{\hat y}_i^k||^2_2,
\end{equation}
where $l_i$ is the length of the signal in base pairs. The goal will
be to find a $\beta$ that can be used for signals of varying length.

In Figure~\ref{fig:variable-breaks-constant-size-berr}, we show
the breakpoint detection error curves for two signals and several
penalty exponents $\beta$.
These curves seem to align when $\beta=-1/2$.

  \begin{figure}[H]
    \hskip -1cm
    \input{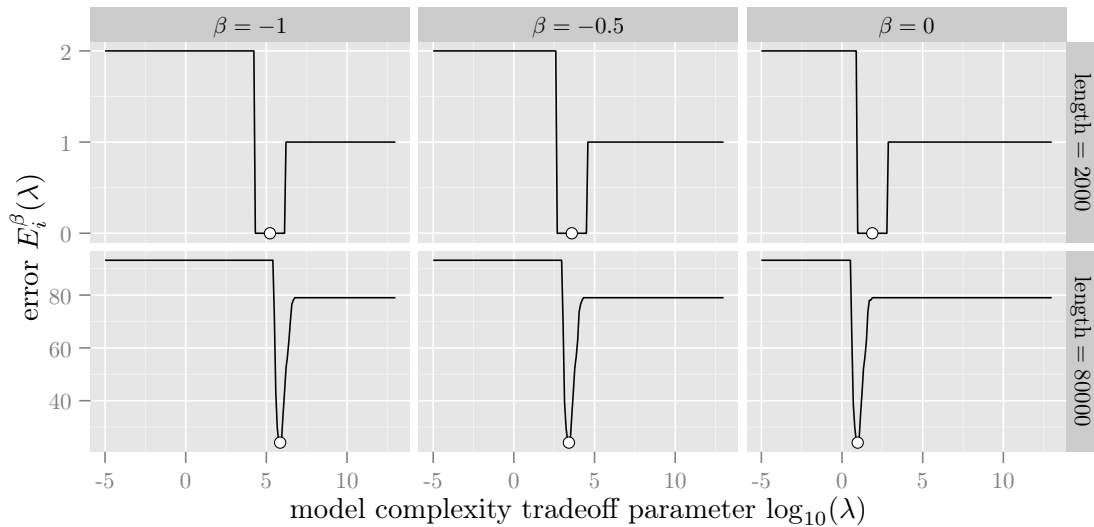}
    \caption{Breakpoint detection error
  curves for several penalty exponents $\beta$ and 2 samples of
  varying length in base pairs $l_i$. The penalty contains a term
  $l_i^\beta$.}
    \label{fig:variable-breaks-constant-size-berr}
  \end{figure}

\newpage

In Figure~\ref{fig:variable-breaks-constant-size-alpha}, we plot the
train and test error curves over the entire set of simulated signals.
These curves indicate minimal breakpoint detection error at
$\beta=-1/2$, corresponding to the following penalty:
\begin{equation}
  \label{eq:kstar_length_opt}
  k_i(\lambda) = \argmin_k \frac{\lambda k}{\sqrt{l_i}}
  + ||\mathbf y_i-\mathbf{\hat y}_i^k||^2_2.
\end{equation}

  \begin{figure}[H]
    \hskip -1cm
\begin{tikzpicture}[x=1pt,y=1pt]
\definecolor[named]{fillColor}{rgb}{1.00,1.00,1.00}
\path[use as bounding box,fill=fillColor,fill opacity=0.00] (0,0) rectangle (433.62,289.08);
\begin{scope}
\path[clip] (  0.00,  0.00) rectangle (433.62,289.08);
\definecolor[named]{drawColor}{rgb}{1.00,1.00,1.00}
\definecolor[named]{fillColor}{rgb}{1.00,1.00,1.00}

\path[draw=drawColor,line width= 0.6pt,line join=round,line cap=round,fill=fillColor] (  0.00,  0.00) rectangle (433.62,289.08);
\end{scope}
\begin{scope}
\path[clip] ( 38.09,157.16) rectangle (407.16,275.83);
\definecolor[named]{fillColor}{rgb}{0.90,0.90,0.90}

\path[fill=fillColor] ( 38.09,157.16) rectangle (407.16,275.83);
\definecolor[named]{drawColor}{rgb}{0.95,0.95,0.95}

\path[draw=drawColor,line width= 0.3pt,line join=round] ( 38.09,189.52) --
	(407.16,189.52);

\path[draw=drawColor,line width= 0.3pt,line join=round] ( 38.09,226.75) --
	(407.16,226.75);

\path[draw=drawColor,line width= 0.3pt,line join=round] ( 38.09,263.98) --
	(407.16,263.98);

\path[draw=drawColor,line width= 0.3pt,line join=round] (110.79,157.16) --
	(110.79,275.83);

\path[draw=drawColor,line width= 0.3pt,line join=round] (222.63,157.16) --
	(222.63,275.83);

\path[draw=drawColor,line width= 0.3pt,line join=round] (334.47,157.16) --
	(334.47,275.83);
\definecolor[named]{drawColor}{rgb}{1.00,1.00,1.00}

\path[draw=drawColor,line width= 0.6pt,line join=round] ( 38.09,170.90) --
	(407.16,170.90);

\path[draw=drawColor,line width= 0.6pt,line join=round] ( 38.09,208.14) --
	(407.16,208.14);

\path[draw=drawColor,line width= 0.6pt,line join=round] ( 38.09,245.37) --
	(407.16,245.37);

\path[draw=drawColor,line width= 0.6pt,line join=round] ( 54.87,157.16) --
	( 54.87,275.83);

\path[draw=drawColor,line width= 0.6pt,line join=round] (166.71,157.16) --
	(166.71,275.83);

\path[draw=drawColor,line width= 0.6pt,line join=round] (278.55,157.16) --
	(278.55,275.83);

\path[draw=drawColor,line width= 0.6pt,line join=round] (390.39,157.16) --
	(390.39,275.83);
\definecolor[named]{drawColor}{rgb}{0.00,0.00,0.00}

\path[draw=drawColor,line width= 0.6pt,line join=round] ( 54.87,236.92) --
	(110.79,214.35) --
	(166.71,169.23) --
	(177.89,168.64) --
	(189.08,164.88) --
	(200.26,165.81) --
	(211.44,166.18) --
	(222.63,162.55) --
	(233.81,164.41) --
	(245.00,164.52) --
	(256.18,166.61) --
	(267.36,173.09) --
	(278.55,174.09) --
	(334.47,242.47) --
	(390.39,270.44);
\end{scope}
\begin{scope}
\path[clip] ( 38.09, 35.17) rectangle (407.16,153.84);
\definecolor[named]{fillColor}{rgb}{0.90,0.90,0.90}

\path[fill=fillColor] ( 38.09, 35.17) rectangle (407.16,153.84);
\definecolor[named]{drawColor}{rgb}{0.95,0.95,0.95}

\path[draw=drawColor,line width= 0.3pt,line join=round] ( 38.09, 60.13) --
	(407.16, 60.13);

\path[draw=drawColor,line width= 0.3pt,line join=round] ( 38.09, 99.24) --
	(407.16, 99.24);

\path[draw=drawColor,line width= 0.3pt,line join=round] ( 38.09,138.36) --
	(407.16,138.36);

\path[draw=drawColor,line width= 0.3pt,line join=round] (110.79, 35.17) --
	(110.79,153.84);

\path[draw=drawColor,line width= 0.3pt,line join=round] (222.63, 35.17) --
	(222.63,153.84);

\path[draw=drawColor,line width= 0.3pt,line join=round] (334.47, 35.17) --
	(334.47,153.84);
\definecolor[named]{drawColor}{rgb}{1.00,1.00,1.00}

\path[draw=drawColor,line width= 0.6pt,line join=round] ( 38.09, 40.57) --
	(407.16, 40.57);

\path[draw=drawColor,line width= 0.6pt,line join=round] ( 38.09, 79.68) --
	(407.16, 79.68);

\path[draw=drawColor,line width= 0.6pt,line join=round] ( 38.09,118.80) --
	(407.16,118.80);

\path[draw=drawColor,line width= 0.6pt,line join=round] ( 54.87, 35.17) --
	( 54.87,153.84);

\path[draw=drawColor,line width= 0.6pt,line join=round] (166.71, 35.17) --
	(166.71,153.84);

\path[draw=drawColor,line width= 0.6pt,line join=round] (278.55, 35.17) --
	(278.55,153.84);

\path[draw=drawColor,line width= 0.6pt,line join=round] (390.39, 35.17) --
	(390.39,153.84);
\definecolor[named]{fillColor}{rgb}{0.20,0.20,0.20}

\path[fill=fillColor,fill opacity=0.50] ( 54.87,148.45) --
	(110.79,143.48) --
	(166.71,128.25) --
	(177.89,116.49) --
	(189.08, 93.91) --
	(200.26, 74.88) --
	(211.44, 69.73) --
	(222.63, 70.79) --
	(233.81, 77.23) --
	(245.00, 84.79) --
	(256.18, 91.71) --
	(267.36, 98.76) --
	(278.55,104.80) --
	(334.47,121.66) --
	(390.39,129.83) --
	(390.39, 40.57) --
	(334.47, 40.57) --
	(278.55, 40.57) --
	(267.36, 40.57) --
	(256.18, 40.57) --
	(245.00, 40.57) --
	(233.81, 40.57) --
	(222.63, 40.57) --
	(211.44, 40.57) --
	(200.26, 40.57) --
	(189.08, 40.57) --
	(177.89, 40.57) --
	(166.71, 40.57) --
	(110.79, 40.57) --
	( 54.87, 40.57) --
	cycle;
\definecolor[named]{drawColor}{rgb}{0.00,0.00,0.00}

\path[draw=drawColor,line width= 0.6pt,line join=round] ( 54.87, 85.49) --
	(110.79, 81.16) --
	(166.71, 70.31) --
	(177.89, 65.26) --
	(189.08, 57.64) --
	(200.26, 52.73) --
	(211.44, 51.43) --
	(222.63, 51.65) --
	(233.81, 53.66) --
	(245.00, 56.41) --
	(256.18, 58.35) --
	(267.36, 61.45) --
	(278.55, 63.83) --
	(334.47, 74.72) --
	(390.39, 79.54);
\end{scope}
\begin{scope}
\path[clip] (  0.00,  0.00) rectangle (433.62,289.08);
\definecolor[named]{drawColor}{rgb}{0.50,0.50,0.50}

\node[text=drawColor,anchor=base east,inner sep=0pt, outer sep=0pt, scale=  0.87] at ( 30.98,167.61) {40};

\node[text=drawColor,anchor=base east,inner sep=0pt, outer sep=0pt, scale=  0.87] at ( 30.98,204.84) {60};

\node[text=drawColor,anchor=base east,inner sep=0pt, outer sep=0pt, scale=  0.87] at ( 30.98,242.08) {80};
\end{scope}
\begin{scope}
\path[clip] (  0.00,  0.00) rectangle (433.62,289.08);
\definecolor[named]{drawColor}{rgb}{0.50,0.50,0.50}

\path[draw=drawColor,line width= 0.6pt,line join=round] ( 33.82,170.90) --
	( 38.09,170.90);

\path[draw=drawColor,line width= 0.6pt,line join=round] ( 33.82,208.14) --
	( 38.09,208.14);

\path[draw=drawColor,line width= 0.6pt,line join=round] ( 33.82,245.37) --
	( 38.09,245.37);
\end{scope}
\begin{scope}
\path[clip] (  0.00,  0.00) rectangle (433.62,289.08);
\definecolor[named]{drawColor}{rgb}{0.50,0.50,0.50}

\node[text=drawColor,anchor=base east,inner sep=0pt, outer sep=0pt, scale=  0.87] at ( 30.98, 37.27) {0};

\node[text=drawColor,anchor=base east,inner sep=0pt, outer sep=0pt, scale=  0.87] at ( 30.98, 76.39) {20};

\node[text=drawColor,anchor=base east,inner sep=0pt, outer sep=0pt, scale=  0.87] at ( 30.98,115.51) {40};
\end{scope}
\begin{scope}
\path[clip] (  0.00,  0.00) rectangle (433.62,289.08);
\definecolor[named]{drawColor}{rgb}{0.50,0.50,0.50}

\path[draw=drawColor,line width= 0.6pt,line join=round] ( 33.82, 40.57) --
	( 38.09, 40.57);

\path[draw=drawColor,line width= 0.6pt,line join=round] ( 33.82, 79.68) --
	( 38.09, 79.68);

\path[draw=drawColor,line width= 0.6pt,line join=round] ( 33.82,118.80) --
	( 38.09,118.80);
\end{scope}
\begin{scope}
\path[clip] (407.16,157.16) rectangle (420.37,275.83);
\definecolor[named]{fillColor}{rgb}{0.80,0.80,0.80}

\path[fill=fillColor] (407.16,157.16) rectangle (420.37,275.83);
\definecolor[named]{drawColor}{rgb}{0.00,0.00,0.00}

\node[text=drawColor,rotate=270.00,anchor=base,inner sep=0pt, outer sep=0pt, scale=  0.87] at (410.48,216.49) {train};
\end{scope}
\begin{scope}
\path[clip] (407.16, 35.17) rectangle (420.37,153.84);
\definecolor[named]{fillColor}{rgb}{0.80,0.80,0.80}

\path[fill=fillColor] (407.16, 35.17) rectangle (420.37,153.84);
\definecolor[named]{drawColor}{rgb}{0.00,0.00,0.00}

\node[text=drawColor,rotate=270.00,anchor=base,inner sep=0pt, outer sep=0pt, scale=  0.87] at (410.48, 94.51) {test};
\end{scope}
\begin{scope}
\path[clip] (  0.00,  0.00) rectangle (433.62,289.08);
\definecolor[named]{drawColor}{rgb}{0.50,0.50,0.50}

\path[draw=drawColor,line width= 0.6pt,line join=round] ( 54.87, 30.90) --
	( 54.87, 35.17);

\path[draw=drawColor,line width= 0.6pt,line join=round] (166.71, 30.90) --
	(166.71, 35.17);

\path[draw=drawColor,line width= 0.6pt,line join=round] (278.55, 30.90) --
	(278.55, 35.17);

\path[draw=drawColor,line width= 0.6pt,line join=round] (390.39, 30.90) --
	(390.39, 35.17);
\end{scope}
\begin{scope}
\path[clip] (  0.00,  0.00) rectangle (433.62,289.08);
\definecolor[named]{drawColor}{rgb}{0.50,0.50,0.50}

\node[text=drawColor,anchor=base,inner sep=0pt, outer sep=0pt, scale=  0.87] at ( 54.87, 21.48) {-2};

\node[text=drawColor,anchor=base,inner sep=0pt, outer sep=0pt, scale=  0.87] at (166.71, 21.48) {-1};

\node[text=drawColor,anchor=base,inner sep=0pt, outer sep=0pt, scale=  0.87] at (278.55, 21.48) {0};

\node[text=drawColor,anchor=base,inner sep=0pt, outer sep=0pt, scale=  0.87] at (390.39, 21.48) {1};
\end{scope}
\begin{scope}
\path[clip] (  0.00,  0.00) rectangle (433.62,289.08);
\definecolor[named]{drawColor}{rgb}{0.00,0.00,0.00}

\node[text=drawColor,anchor=base,inner sep=0pt, outer sep=0pt, scale=  1.09] at (222.63,  9.94) {penalty exponent $\beta$};
\end{scope}
\begin{scope}
\path[clip] (  0.00,  0.00) rectangle (433.62,289.08);
\definecolor[named]{drawColor}{rgb}{0.00,0.00,0.00}

\node[text=drawColor,rotate= 90.00,anchor=base,inner sep=0pt, outer sep=0pt, scale=  1.09] at ( 18.16,155.50) {total error relative to true breakpoints (breakpointError)};
\end{scope}
\end{tikzpicture}
    \caption{Train and test error curves
  for signals of different length in base pairs $l_i$. The penalty
  contains a term
  $l_i^\beta$.}
    \label{fig:variable-breaks-constant-size-alpha}
  \end{figure}
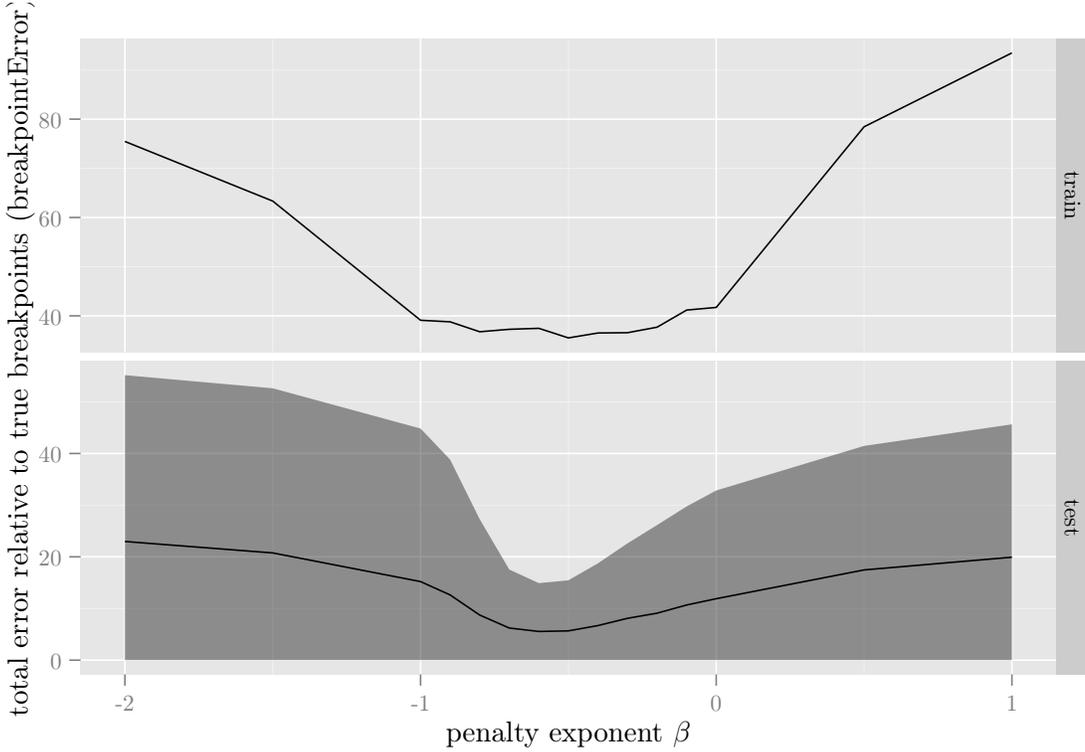

Interestingly, the $1/\sqrt{l_i}$ term that we obtain here is in good
agreement with our previous result that the optimal penalty for
variable sampling density $d_i$ should have a $\sqrt{d_i}$ term. In
particular, we can re-parameterize the problem to be in terms of the
number of points sampled per segment $\rho_i=d_i/k_i$. In
Section~\ref{variable_density} we held $k_i$ constant but in this
section we hold $d_i$ constant. In both cases we have a penalty with a
$\sqrt{\rho_i}=\sqrt{d_i/k_i}$ term.

However, we do not know the number of segments $k_i$ in advance. But
we supposed that the number of segments is proportional to the number
of base pairs $l_i$, so we can use that in the penalty. This suggests
a penalty that takes the form of $\sqrt{d_i/l_i}$. So in the next
section, we confirm that this intuition works for constructing an
optimal penalty.

\newpage
\subsection{Combining normalizations}
\label{combining_penalties}
In this section, we show that we can combine the results of the
previous sections to create composite invariant penalties. In
particular, to normalize for sampling density $d_i$ and length in base
pairs $l_i$, we need $\sqrt{d_i}$ and $1/\sqrt{l_i}$ terms in the
penalty, respectively. This suggests that when considering variable
$d_i$ and $l_i$, we need a $\sqrt{d_i/l_i}$ term in the penalty, and
in this section we show that this intuitive construction results in an
optimal penalty.

In Figure~\ref{fig:variable-size-signals}, we plot 2 signals with
different number of points $d_i$ and length in base pairs $l_i$. In
particular we tested $d_i\in\{50, \dots, 1000\}$ and $l_i\in\{200,
\dots, 1000\}$. We would like to find a penalty that allows us to
generalize model complexity tradeoff parameters $\lambda$ between
these signals.

For each signal $i$, we define the penalty
\begin{equation}
  \label{eq:kstar_composite}
  k_i^{\alpha,\beta}(\lambda) = \argmin_k \lambda k {d_i}^\alpha l_i^\beta
  + ||\mathbf y_i - \mathbf{\hat y}_i^k||^2_2,
\end{equation}
where $l_i$ is the signal length in base pairs and $d_i$ is the number
of points sampled. We will attempt to determine a pair of $\alpha$ and
$\beta$ values that allow accurate breakpoint detection in signals of
varying length and number of points sampled. Based on the results in
Sections~\ref{variable_density} and \ref{variable_size}, we expect to
find $\alpha=1/2$ and $\beta=-1/2$.

\begin{figure}[H]
  \centering
\includegraphics[width=\linewidth]{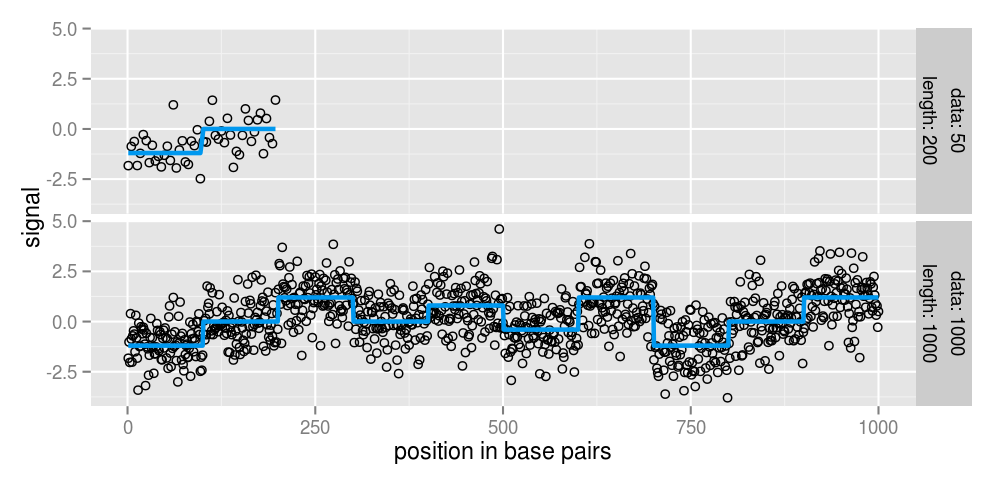}
  \caption{Two signals with a different number of
    points $d_i$ and length in base pairs $l_i$.}
\label{fig:variable-size-signals}
\end{figure}

\newpage

In Figure~\ref{fig:variable-size-error-alpha-beta}, we plot the train
and test breakpoint error functions as a function of both $\alpha$ and
$\beta$. It is clear that the minimum is achieved by penalties near
$\alpha=1/2, \beta=-1/2$, which corresponds to a penalty of

\begin{equation}
  \label{eq:kstar_composite_values}
  k_i^{\alpha,\beta}(\lambda) = \argmin_k \lambda k
  \sqrt{d_i/l_i}
  + ||\mathbf y_i - \mathbf{\hat y}_i^k||^2_2,
\end{equation}

\figpdf{variable-size-error-alpha-beta}{Train and test error functions
  for several signals of varying number of points $d_i$ and length
  $l_i$. The penalty contains a term $d_i^\alpha l_i^\beta$. Mean
  error values normalized to $[0,1]$, minimum values indicated in red,
  and expected value $\alpha=1/2, \beta=-1/2$ in white. }






\newpage
\subsection{Optimal penalties for the fused lasso signal approximator}

In the previous sections, we used theoretical arguments and simulation
experiments to determine the optimal penalties for maximum likelihood
segmentation (\ref{eq:yhat^k}). In this section, we demonstrate that
the same approach can be used to find optimal penalties for another
model, the Fused Lasso Signal Approximator (FLSA).

We used the \verb|flsa| function in version 1.03 of the flsa
package from CRAN to calculate the FLSA \citep{fused-lasso-path}. Let
$\mathbf x\in\RR^d$ be the noisy copy number signal for one chromosome. The
FLSA solves the following optimization problem:
\begin{equation}
  \label{eq:flsa}
\argmin_{\mathbf m\in\RR^d} 
\frac 1 2 \sum_{j=1}^d (x_j-m_j)^2
+\lambda_1\sum_{j=1}^d|m_j|
+\lambda_2\sum_{j=1}^{d-1}|m_j-m_{j+1}|.
\end{equation}

First, we take $\lambda_1=0$ since we are concerned with breakpoint
detection, not signal sparsity. In this section, our aim is to
determine a parameterization for $\lambda_2$ that we will be able to
find similar breakpoints in signals of varying sampling density.

We use the same setup that we used to determine optimal penalties for
maximum likelihood segmentation, as described in
Section~\ref{variable_density} and shown again in
Figure~\ref{fig:variable-density-signals-flsa}.

\begin{figure}[h]
  \centering
  \includegraphics[width=\linewidth]{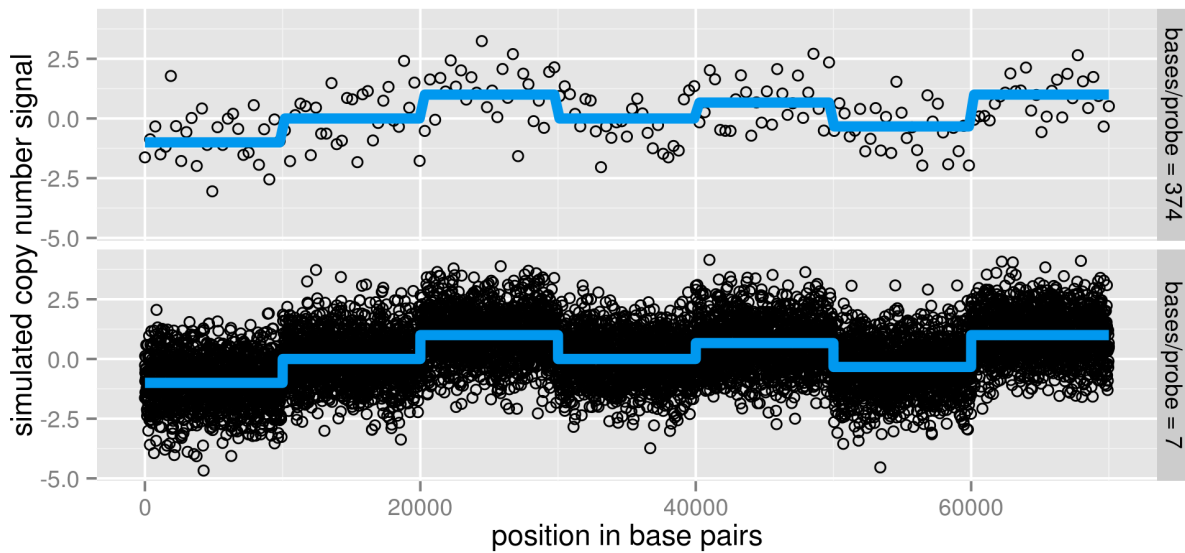}
  \caption{Simulated signals with different
  sampling density.}
  \label{fig:variable-density-signals-flsa}
\end{figure}

\newpage

In particular, for every signal $i\in\{1,\dots,z\}$, let
$\mathbf y_i\in\RR^{d_i}$ be the noisy data, sampled at positions
$\mathbf p_i\in\mathcal P^{d_i}$. To find an optimal penalty for these data,
first let $\lambda_2 = \lambda d_i^\alpha$. For each signal $i$,
exponent $\alpha\in\RR$, and tradeoff parameter $\lambda\in\RR^+$, we
define the optimal smoothing as
\begin{equation}
  \label{eq:flsa_lambda}
  \mathbf{\hat y}^{\lambda,\alpha}_i = 
\argmin_{\mathbf m\in\RR^{d_i}} 
\frac 1 2 ||\mathbf y_i-\mathbf m||_2^2
+\lambda d_i^\alpha \sum_{j=1}^{d_i-1} |m_j - m_{j+1}|.
\end{equation}

Then, we define the breakpoint detection error as a function of the
breaks in the smoothed signal:
\begin{equation}
  \label{eq:flsa_e_i_alpha}
  E_i^\alpha(\lambda) = 
E^B_{\text{exact}}
\left[
\phi\left(
\mathbf{\hat y}^{\lambda,\alpha}_i, \mathbf p_i
\right)
\right],
\end{equation}
where the breakpoint function $\phi$ is defined in
(\ref{eq:breaks_phi}).

We plot $E_i^\alpha$ for 2 signals $i$ and several penalty exponents
$\alpha$ in Figure~\ref{fig:variable-density-berr-flsa}. Note that the
functions appear to align when $\alpha=1$.

  \begin{figure}[H]
    \hskip -1cm
    \input{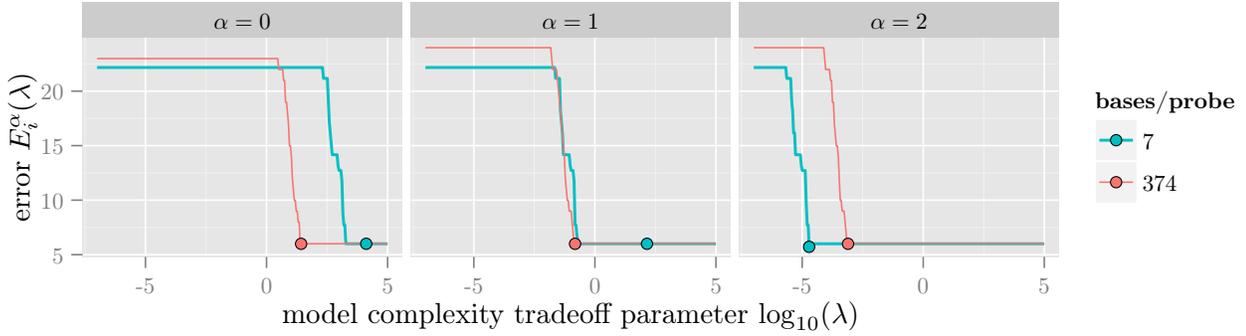}
    \caption{Model complexity breakpoint error
  functions $E_i^\alpha$.}
    \label{fig:variable-density-berr-flsa}
  \end{figure}

\newpage

To evaluate which penalty parameter $\alpha$ results in optimal
fitting and learning, we computed train error $E^*$ and TestErr as defined
in (\ref{eq:lerr_train}) and (\ref{eq:lerr_test}). These functions are
plotted in Figure~\ref{fig:variable-density-error-alpha-flsa}, and
suggest that a value of $\alpha=1$ is optimal. This analysis suggests
that taking $\lambda_2=\lambda d_i$ is optimal for breakpoint
detection using FLSA. This agrees with the observation of
\citet{HOCKING-breakpoints} that the flsa.norm penalty with a $d_i$
term works better than the un-normalized flsa penalty.

However, we obtained a different penalty ($\alpha=0.5$) in
Section~\ref{variable_density} for another model, maximum likelihood
segmentation. These differences in optimal $\alpha$ values are due to
the differences in how model complexity is measured in the two
models. Maximum likelihood segmentation measures model complexity
using the $\ell_0$ pseudo-norm of the difference vector of $\mathbf m$,
whereas the FLSA uses the $\ell_1$-norm.

We conclude by noting that this procedure could also be applied to
find penalties for FLSA that depend on other signal properties such as
length in base pairs~$l_i$. However, we did not pursue this since FLSA
does not work as well as maximum likelihood segmentation in practice
on real data \citep{HOCKING-breakpoints}.

  \begin{figure}[H]
    \hskip -1cm
    \input{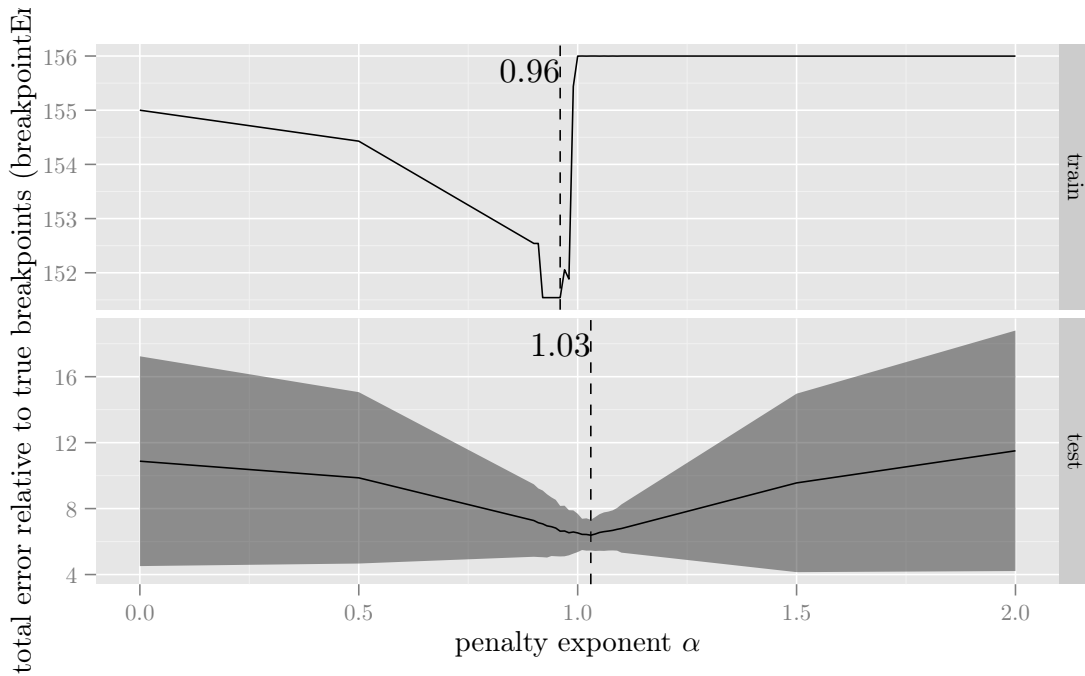}
    \caption{Train and test error as a
  function of penalty exponent $\alpha$. The penalty has a term for
  the number of points sampled $d_i^\alpha$.}
    \label{fig:variable-density-error-alpha-flsa}
  \end{figure}

\newpage

\subsection{Applying the penalties to real data}

In Sections~\ref{variable_density}-\ref{variable_size}, we found
penalties with minimum breakpointError for simulated data with varying
number of data points sampled $d_i$ and length $l_i$ in base positions
(with $l_i$ proportional to the number of breakpoints). In
Section~\ref{combining_penalties}, we demonstrated that these results
can be combined. We also found that the optimal penalty should include
a term for the estimated variance $\hat s_i^2$
\citep{HOCKING-phd-ch4}. These results suggested the following
penalty, for every signal $i$:
\begin{equation}
  \label{eq:composite_penalty}
  k_i(\lambda) = 
  \argmin_k
  \lambda k \hat s_i^2 \sqrt{d_i/l_i}  +
  ||\mathbf y_i - \mathbf{\hat y}_i^k||^2_2
\end{equation}

In Table~\ref{tab:penalty-real-data}, we report results of using the
suggested penalties on the neuroblastoma data set described by
\citet{HOCKING-breakpoints}. The cghseg.k penalty which was found to
be the best by \citet{HOCKING-breakpoints} has a term for number of
data points sampled $d_i$ (no square root) but no terms for length
$l_i$ nor estimated variance $\hat s_i$. The penalty terms suggested
in this section do not improve breakpoint detection error in the
neuroblastoma data set. This observation suggests that distribution
that generates the real data is more complex than the simple
simulation model considered in this paper.

  \begin{table}[H]
    \begin{center}
\begin{tabular}{rrrrrrr}
  \hline
 & points & length & variance & train & test.mean & test.sd \\ 
  \hline
cghseg.k & 1 & 0 & 0 & 2.19 & 2.20 & 1.01 \\ 
  cghseg.k.var & 1 & 0 & 2 & 2.46 & 2.73 & 1.98 \\ 
  cghseg.k.sqrt.d & $1/2$ & 0 & 0 & 3.51 & 3.87 & 1.58 \\ 
  cghseg.k.sqrt & $1/2$ & $-1/2$ & 0 & 4.30 & 6.11 & 5.02 \\ 
  cghseg.k.sqrt.d.var & $1/2$ & 0 & 2 & 3.19 & 4.47 & 5.02 \\ 
  cghseg.k.sqrt.var & $1/2$ & $-1/2$ & 2 & 4.18 & 6.38 & 7.61 \\ 
   \hline
\end{tabular}

    \end{center}
    \caption{Breakpoint detection error of several
  penalties on the neuroblastoma data set, with 1 row for each
  penalty. The exponent of the number of data points $d_i$, length
  $l_i$, and variance $\hat s_i$ terms in the penalty is shown with
  the train and test annotation error (percent incorrect regions). }
    \label{tab:penalty-real-data}
  \end{table}

Practically speaking, we still would like to find a penalty with
optimal breakpoint detection for any particular real data set such as
the neuroblastoma data. 
\citet{HOCKING-penalties} achieved state-of-the-art breakpoint
detection in the neuroblastoma data set by learning
the penalty constants using a training data set of manually annotated
regions. 
For the rest
of this paper, we will discuss the relationship of the breakpointError
to these annotation-guided methods.

\newpage

\section{Annotation error functions for real data sets}
\label{sec:relaxation}

In this section, we define several annotation error functions which
can be used in real data sets (Table~\ref{tab:ann-err-funs}).  In real
data, we do not have access to the true piecewise constant signal
$\mathbf m$, nor the underlying set of breakpoints $B$. So the
breakpointError defined in the Section~3 is not readily
computable. We will first show how in real data, we can compute
another function called the incomplete annotation error. Then, we will
demonstrate its relationship to the breakpointError using the complete
annotation error function.

\begin{table}[H]
  \begin{center}
  \begin{tabular}{ccccc}
    Section & Error function & Symbol & Need & counts incorrect \\
    \hline
    \ref{sec:breakpoint_error}& breakpointError & $E^B_{\text{exact}}$ & 
    true breakpoints $B$ & guesses \\
    \ref{sec:incomplete} & incomplete annotation error & $E_{\text{incomplete}}^A$ &
    some annotations $A$ & guesses\\
    \ref{sec:complete} & complete annotation error & $E_{\text{complete}}^A$ & 
    all breakpoint annotations $A$ & guesses\\
    \ref{sec:zero-one}& 
    01 annotation error & $E_{01}^A$ & some annotations $A$ & regions
  \end{tabular}
  \end{center}
  \caption{Several breakpoint detection error functions, 
    and how much prior knowledge is needed to compute each. 
    The breakpointError needs the most prior knowledge and can only be 
    used in simulations when the true breakpoints $B$ are known. 
    In contrast, the incomplete/01 annotation error can be used in
    real data sets by using visual inspection of scatterplots to
    create annotations $A$.}
  \label{tab:ann-err-funs}
\end{table}

\subsection{Incomplete annotation error for real data}
\label{sec:incomplete}

By plotting a real data set, we can easily identify regions that
contain breakpoints by visual inspection, as shown in
Figure~\ref{fig:variable-density-annotation-cost}.

\begin{figure}[H]
  \centering
\includegraphics[width=\linewidth]{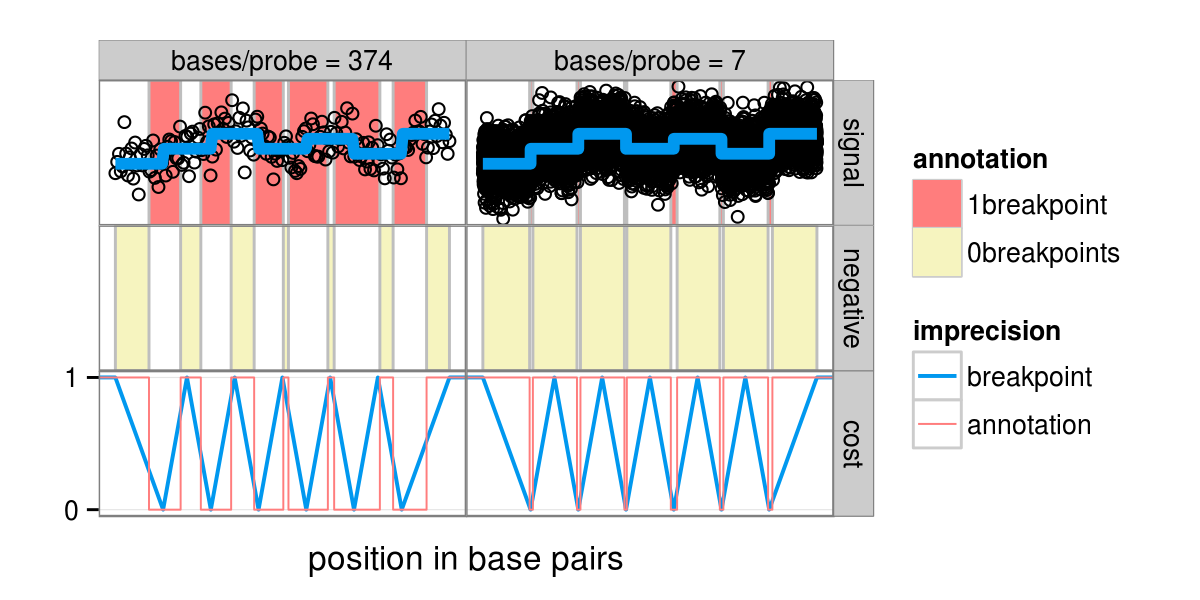}
\vskip -0.1in
  \caption{\textbf{Top}: simulated noisy
  signals (black) with their true piecewise constant signals $\mathbf m$ (blue) 
  and
  visually-determined breakpoint annotations $A$
  (red). 
\textbf{Middle}: negative annotations $A^0$ constructed
  using (\ref{eq:A^0}).
\textbf{Bottom}: breakpoint detection
  imprecision curves for the breakpointError $\ell_i$
  (\ref{eq:imprecision}) and the annotation error $\hat
  \ell_i$ (\ref{eq:hat_ell_i}).}
\label{fig:variable-density-annotation-cost}
\end{figure}


Recall that there are $P$ distinct positions in a series at which data
could be gathered, 
and that $\mathbb B=\{1,\dots,P-1\}$ is the set of all
positions after which a breakpoint is possible.

\begin{definition}
  A set of $n$ \textbf{annotations} can be written as $A=\{(a_1,
  \r_1), \dots, (a_n, \r_n)\}$. For each annotation~$i$,
  $\r_i\subset\mathbb I\mathbb B$ is an interval that defines the
  region, and $a_i\subseteq\{0,1,\dots\}$ is an interval of allowable
  breakpoint counts in this region.
\end{definition}

For
example, consider the annotated regions in
Table~\ref{tab:sample_annotations}.

\begin{table}[H]
  \begin{center}
    \begin{tabular}{ccc}
  $i$ & Allowed breakpoints $a_i$ & Region $\r_i$\\
\hline
1 & \{0\} & [5,10]\\
2 & \{1\} & [20,30]\\
3 & \{1,2,\dots\} & [40,70]\\
4 & \{0\} & [80,100]
\end{tabular}
  \end{center}
  \caption{Sample annotated regions for a signal sampled on $P=100$ base pairs. 
    An annotation $a_i$ indicates how many breakpoints are allowed in the
    corresponding region $\r_i$.
    There are 0 breaks in bases 5-10 and 80-100.
    There is exactly 1 break in bases 20-30.
    There is at least 1 break in bases 40-70.}
  \label{tab:sample_annotations}
\end{table}

Given a set of breakpoint guesses $G\subseteq \mathbb B$, we define
the annotation-dependent false positive count as
\begin{equation}
  \label{eq:FP_hat}
  \hat{ \text{FP}}(G,\r,a) =
    \big( 
|G\cap\r|-\max(a)
\big)_+
\end{equation}
where the positive part function is defined as
\begin{equation}
x_+ =
  \begin{cases}
    x & \text{ if } x > 0 \\
    0 & \text{ otherwise.}
  \end{cases}
\end{equation}
Similarly, the annotation-dependent false negative count is defined as
\begin{equation}
  \label{eq:FN_hat}
  \hat{ \text{FN}}(G,\r,a) =
  \big(
\min(a)-|G\cap\r|
\big)_+.
\end{equation}

\begin{definition}
  Let $A$ be a set of annotations and $G\subseteq \mathbb B$
  a set of breakpoint guesses. The \textbf{incomplete annotation error} is the
  count of annotation-dependent false positives and false negatives:
\begin{equation*}
  E^A_{\text{incomplete}}(G)=
    \sum_{i=1}^n 
    \hat{\textrm{FP}}(G, \r_i, a_i) + 
    \hat{\textrm{FN}}(G, \r_i, a_i).
\end{equation*}
\end{definition}

In the case of analyzing the simulated signals in the top panels of
Figure~\ref{fig:variable-density-annotation-cost}, let us consider the
set of 6 annotations $\hat A=\{(\hat a_1, \hat \r_1), \dots, (\hat
a_6, \hat \r_6)\}$ depicted using the red rectangles. These rectangles
were determined by visual inspection of the scatterplots. I used the
SegAnnDB interactive annotation web site to view the data and save a
database of 6 regions per profile \citep{SegAnnDB}. Every region $\hat
\r_i$ contains exactly 1 breakpoint, so we have $\hat a_i=\{1\}$ for every
annotation $i\in\{1, \dots, 6\}$. In real data we will probably only
be able to see a subset of the real breakpoints, but we analyze the
complete set of breakpoints in these simulated data to illustrate the
approximation induced by the annotation process.

Given any set of non-intersecting annotations $A$, we can write
$\underline r_1 < \cdots < \underline r_n$ to order the regions. Then
we can define $|A|+1$ negative annotations as
\begin{equation} 
  \label{eq:A^0}
   A^0 = \big\{ 
(0, [1,\underline r_1-1]),\ 
(0, [\overline r_1+1, \underline r_2-1]),\ 
\dots,\ 
(0, [\overline r_{n-1}+1,\underline r_n-1]),\ 
(0, [\underline r_n+1,d-1])
\big\},
\end{equation}
as drawn with yellow rectangles in the middle panels of
Figure~\ref{fig:variable-density-annotation-cost}. We will use the
complete set of annotations $\hat A\cup \hat A^0$ to define the
annotation error $E^{ \hat A\cup \hat A^0}_{\text{incomplete}}(G)$ for
breakpoint guesses $G$ given by models of these simulated signals.

\newpage

In Figure~\ref{fig:variable-density-sigerr-small}, we plot some model
selection error functions for the 2 simulated signals shown in
Figure~\ref{fig:variable-density-annotation-cost}. It is clear that
the annotation error is a good approximation of the breakpointError,
and there are several interesting observations to note.

\begin{itemize}
\item \textbf{Signal}: in these simulated data, the true piecewise
  constant signal $\mathbf m$ is available, so an efficient model
  selection procedure \citep{sylvain-survey} would select the
  estimated model which is closest to the true signal. That idea is
  illustrated in Figure~\ref{fig:motivation-modelOnly}, and can be
  used in this context by minimizing
  \begin{equation}
    \label{eq:signal_cost}
    E_{\text{signal}}(k) = \log_{10}\left[
\frac 1 d \sum_{i=1}^d(\mathbf{\hat y}_i^k - \mathbf m_i)^2
\right].
  \end{equation}
  \begin{itemize}
  \item In Figure~\ref{fig:variable-density-sigerr-small}, for the
    signal sampled at 7 bases/probe, the minimum of the Signal error
    identifies a model with 7 segments.
  \item For the signal sampled at 374 bases/probe, the minimum of the
    error identifies a model with only 5 segments.
  \end{itemize}
\item \textbf{Breakpoint}: in these simulated data, the true
  breakpoints $B$ are available, so we can compute and minimize the
  breakpointError as a consistent model selection procedure
  \citep{sylvain-survey}. For both signals, the minimum of the
  breakpointError identifies a model with 7 segments (6
  breakpoints).
\item \textbf{Annotation}: we use a set of annotated regions to
  compute the incomplete annotation error, which also identifies a
  model with 7 segments. It is clear that the annotation error is a
  good approximation of the breakpointError. In the next section, we
  explicitly demonstrate the link between the breakpointError and the
  annotation error.
\end{itemize}
\begin{figure}[H]
  \centering
  \input{figure-variable-density-sigerr-small}
  \vskip -0.5cm
  \caption{Model selection error curves for 2 simulated
    signals. Minima are highlighted using circles.
    \protect\\
    \textbf{Signal}: the log squared error $E_{\text{signal}}$ of the
    estimated signal with respect to the
    true piecewise constant signal (see text).
    \protect\\
    \textbf{Breakpoint}: exact breakpointError $E^B_{\text{exact}}$.
    \protect\\
    \textbf{Annotation}: incomplete annotation error
    $E^{\hat A\cup \hat A^0}_{\text{incomplete}}$.
}
  \label{fig:variable-density-sigerr-small}
\end{figure}

\newpage

\subsection{Link with breakpointError using complete annotation error}

\label{sec:complete}
It is clear from Figure~\ref{fig:variable-density-sigerr-small} that
the annotation error is a good approximation of the exact breakpoint
error when the annotations $A$ agree with the real
breakpoints $B$. In this section, we make this intuition precise by
showing exactly how to relax the breakpointError to obtain the
annotation error. There are two steps:
\begin{enumerate}
\item We define the complete annotation error by relaxing the
  definition of the exact \mbox{breakpointError}.
\item We show that the complete annotation error is equivalent
  to the incomplete annotation error when we have a complete set of
  annotations.
\end{enumerate}


We will define the complete annotation error as a relaxation of the
exact breakpointError. Recall from
Definition~\ref{def:exact_breakpoint_cost}, 
the exact breakpointError is
$$
  {E }_{\text{exact}}^B(G) =
  \big|G\setminus(\cup R_B)\big|
 + \sum_{i=1}^{|B|}\text{FP}(G,\r_i)+\text{FN}(G,\r_i)+I(G,\r_i,\ell_i).
$$
To define the complete annotation error, we perform two relaxations:
\begin{itemize}
\item Instead of using the true breakpoints $B$ (which are unknown in
  real data) with equations (\ref{eq:R_i}) and (\ref{eq:L_i}) to
  determine a breakpoint region $\r_i$, we use the region $\hat\r_i$
  determined by visual
  inspection.
\item Rather than the piecewise affine imprecision $\ell_i$, we use
  the zero-one imprecision $\hat \ell_i$:
\begin{equation}
  \label{eq:hat_ell_i}
  \hat\ell_i(g) = 1_{g\not\in \hat \r_i}.
\end{equation}
We show this relaxation by ploting the imprecision functions $\ell_i$
and $\hat \ell_i$ in the bottom panels of
Figure~\ref{fig:variable-density-annotation-cost}.
\end{itemize}

\begin{definition}
  Assume there are $n$ breakpoints $B_1 < \dots < B_n$, and we observe
  a set of annotations $\hat A= \{(a_1, \hat\r_1), \dots, (a_n,
  \hat\r_n)\}$ each with $a_i=1$ breakpoint, such that $B_1\in \hat
  \r_1, \dots, B_n\in\hat \r_n$. The \textbf{complete annotation
    error} of a set of breakpoint guesses $G$ is the sum of false positive and
  false negative counts:
  \begin{eqnarray*}
    E_{\textrm{complete}}^{\hat A} (G)
    &=&  \Big|G\setminus(\cup \hat A)\Big| \nonumber
    + \sum_{i=1}^{|\hat A|}
    \textrm{FP}(G,\hat\r_i)+\textrm{FN}(G,\hat\r_i)+I(G,\hat\r_i,\hat\ell_i)\\
    &=&  \Big|G\setminus(\cup \hat A)\Big|
    + \sum_{(a,\hat\r)\in\hat A}
    \textrm{FP}(G,\hat\r)+\textrm{FN}(G,\hat\r).
  \end{eqnarray*}
\end{definition}

It is clear that $E^{\hat A}_{\text{complete}}$ depends on the
annotations only through their regions. In particular, the annotated
breakpoint counts $a_i=\{1\}$ are not used in this definition, since
we assumed that each region $\hat \r_i$ contains exactly 1
breakpoint. Also, since we used the zero-one imprecision for $\hat\ell_i$, the
imprecision function $I$ is always zero.

\begin{proposition}
  Let $\hat A^0$ be a set of negative annotations as in
  (\ref{eq:A^0}). Then for a set of breakpoint guesses $G$, the
  incomplete and complete annotation error functions are equivalent:
\begin{equation*}
  E_{\text{incomplete}}^{\hat A\cup \hat A^0}(G)
=
E^{\hat A}_{\text{complete}}(G).
\end{equation*}
\end{proposition}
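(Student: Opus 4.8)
The plan is to expand both error functions over the individual annotations and match them term by term. Two facts do all the work: every region of $\hat A$ carries exactly one allowed breakpoint ($a_i=\{1\}$), and the regions of $\hat A^0$ tile the part of $\mathbb B$ left uncovered by $\hat A$.

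First I would split the sum defining $E_{\text{incomplete}}^{\hat A\cup\hat A^0}(G)$ into the contribution of the positive annotations $\hat A$ and that of the negative annotations $\hat A^0$. For a positive annotation $(\{1\},\hat\r_i)$ we have $\min(a_i)=\max(a_i)=1$, so (\ref{eq:FP_hat}) and (\ref{eq:FN_hat}) reduce to $\hat{\text{FP}}(G,\hat\r_i,\{1\})=(|G\cap\hat\r_i|-1)_+$ and $\hat{\text{FN}}(G,\hat\r_i,\{1\})=(1-|G\cap\hat\r_i|)_+$. A case split on whether $|G\cap\hat\r_i|=0$ or $|G\cap\hat\r_i|\ge 1$ then shows these coincide with the exact-error quantities $\text{FP}(G,\hat\r_i)$ of (\ref{eq:FP_i}) and $\text{FN}(G,\hat\r_i)$ of (\ref{eq:FN_i}); in particular the degenerate value $(0-1)_+=0$ is exactly what $\text{FP}$ returns on an empty region. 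Hence the positive annotations contribute the same sum $\sum_{(a,\hat\r)\in\hat A}\text{FP}(G,\hat\r)+\text{FN}(G,\hat\r)$ that already appears in $E_{\text{complete}}^{\hat A}$.

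Next I would treat the negative annotations. For each $(\{0\},\r)\in\hat A^0$ we have $\min(a)=\max(a)=0$, so $\hat{\text{FN}}(G,\r,\{0\})=0$ and $\hat{\text{FP}}(G,\r,\{0\})=|G\cap\r|$, leaving the single remaining contribution $\sum_{(\{0\},\r)\in\hat A^0}|G\cap\r|$. The goal is to identify this with the term $|G\setminus(\cup\hat A)|$ of $E_{\text{complete}}^{\hat A}$. The key step, and the only place the explicit construction (\ref{eq:A^0}) is used, is to check that the intervals listed in $\hat A^0$ are pairwise disjoint and that their union is exactly $\mathbb B\setminus(\cup\hat A)$: using the ordering $\underline r_1<\cdots<\underline r_n$ and the non-intersection of the regions $\hat\r_i=[\underline r_i,\overline r_i]$, the leading interval $[1,\underline r_1-1]$, the gap intervals $[\overline r_i+1,\underline r_{i+1}-1]$, and the trailing interval together cover precisely the positions of $\mathbb B$ outside every positive region, with no overlaps. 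Granting this tiling, every guess $g\in G$ outside all positive regions lies in exactly one negative region and is counted once, while guesses inside a positive region are counted by none, so $\sum_{(\{0\},\r)\in\hat A^0}|G\cap\r|=|G\cap(\mathbb B\setminus\cup\hat A)|=|G\setminus(\cup\hat A)|$, the last equality because $G\subseteq\mathbb B$. Adding the positive and negative contributions gives the claimed equality.

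I expect the only real work to be bookkeeping around this tiling claim: confirming the boundary intervals behave correctly and fixing a consistent right endpoint of $\mathbb B$ (the construction (\ref{eq:A^0}) writes $d-1$, whereas Section~3 uses $P-1$). There is no analytic content; once the regions of $\hat A^0$ are shown to partition the complement of $\cup\hat A$, the equivalence follows from the elementary positive-part identities above.
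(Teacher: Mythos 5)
Your proposal follows essentially the same route as the paper's proof: reduce $\hat{\text{FP}}$ and $\hat{\text{FN}}$ on the positive annotations to $\text{FP}$ and $\text{FN}$ via the positive-part identities, and use the fact that the negative regions of $\hat A^0$ disjointly tile $\mathbb B\setminus(\cup\hat A)$ to identify their total false-positive count with $|G\setminus(\cup\hat A)|$. You are somewhat more explicit than the paper about verifying the disjoint tiling and about the vanishing of $\hat{\text{FN}}(G,\r,\{0\})$, but these are refinements of the same argument, not a different one.
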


\begin{proof}

To see the connection between the complete and incomplete annotation
error functions, first note that
\begin{eqnarray}
\nonumber  \hat{\text{FN}}(G,\r,\{1\}) 
&=&   \big(
1-|G\cap\r|
\big)_+\\
&=&
\text{FN}(G,\r), \label{eq:fn-hat}
\end{eqnarray}
and
\begin{eqnarray}
\nonumber
  \hat{\text{FP}}(G,\r,\{1\}) 
&=&
\big( 
|G\cap\r|-1
\big)_+\\
&=& \text{FP}(G,\r). \label{eq:fp-hat}
\end{eqnarray}
For the complete annotation error we quantified the false positive
rate of the breakpoints that fall outside of the breakpoint regions
$\hat A$ using $G\setminus(\cup \hat A)$. For the incomplete
annotation error, we instead created a set of 0-breakpoint annotations
$\hat A^0$ for this purpose. Note that by construction of the negative
regions in (\ref{eq:A^0}), we have 
\begin{equation}
  \label{eq:Rcomplement}
  G\setminus \left(\cup \hat A\right)
 = 
G\cap\left(\cup \hat A^0 \right),
\end{equation}
or in words, the guesses outside of the breakpoint annotations $\hat A$
are in the negative annotations $\hat A^0$. So using
(\ref{eq:Rcomplement}), we have
\begin{eqnarray}
  \hat{\text{FP}}(G,(\cup \hat A^0),\{0\})
&=& 
|G\cap(
  \cup \hat A^0
)|\nonumber
\\
&=&
|G\setminus(\cup\hat A)|,\label{eq:fp-outside}
\end{eqnarray}
which is the first component of the complete annotation error.

Recall that $\hat A$ represents annotated regions that each contain
exactly 1 breakpoint, and $\hat A^0$ are regions with no breakpoints. So
using (\ref{eq:fn-hat}), (\ref{eq:fp-hat}), and
(\ref{eq:fp-outside}), we have that the incomplete annotation error
is equivalent to the complete error:
\begin{eqnarray}
  E_{\text{incomplete}}^{\hat A\cup \hat A^0}(G)
&=&
\sum_{(a, \r)\in \hat A^0} \hat{\text{FP}}(G,\r,a)\nonumber 
+
\sum_{(a, \r)\in\hat A} \hat{\text{FP}}(G,\r,a) + \hat{\text{FN}}(G,\r,a)
 \\
&=&
 \hat{\text{FP}}(G,\cup \hat A^0,\{0\})\nonumber 
+
\sum_{(a, \r)\in\hat A} \hat{\text{FP}}(G,\r,\{1\}) + \hat{\text{FN}}(G,\r,\{1\})
 \\
&=&
|G\setminus(\cup\hat A)|
+\nonumber
\sum_{(a, \r)\in\hat A} {\text{FP}}(G,\r) + {\text{FN}}(G,\r)\\
&=&
E^{\hat A}_{\text{complete}}(G).
\end{eqnarray}

\end{proof}

So in fact the incomplete annotation error is equivalent to the
complete error when the annotated regions $\hat A$ each contain
exactly 1 breakpoint. But we call this the incomplete error since it
is also well-defined for arbitrary sets of regions $A$.


\newpage

\subsection{Zero-one annotation error}
\label{sec:zero-one}

The incomplete annotation error counts incorrect breakpoints. In this
section, we show that by thresholding the incomplete annotation error,
we can obtain the zero-one annotation error function. This is the
original annotation error function that was introduced by
\citet{HOCKING-breakpoints}, who used it to count the number of
incorrect regions.

First, let us define the zero-one thresholding function
$t:\mathbb Z^+\rightarrow\mathbb Z^+$ as
\begin{equation}
  \label{eq:thresholding}
  t(x)=1_{x\neq 0} =
  \begin{cases}
    1 & \text{if }x\neq 0\\
    0 & \text{otherwise}.
  \end{cases}
\end{equation}

The idea of thresholding is to limit the error that any one annotation
can induce. We define the zero-one annotation error as
\begin{eqnarray}
  \label{eq:ann01err}
  E_{01}^{A}(G)
&=&\nonumber
 \sum_{(a, \r)\in A} 
t\left[\hat{\text{FP}}(G,\r,a)\right]+
t\left[\hat{\text{FN}}(G,\r,a)\right]\\
&=&\nonumber
 \sum_{(a, \r)\in A} 
1_{|G\cap\r|>\max(a)}+
1_{|G\cap \r|<\min(a)}\\
&=&
 \sum_{(a, \r)\in A} 
1_{|G\cap\r|\not\in a}.
\end{eqnarray}
So using the zero-one annotation error, we count incorrect annotated
regions instead of incorrect breakpoint guesses.  

\newpage

\subsection{Comparing annotation error functions}

In practice, we have few annotated regions per signal in real data. In
Figure~\ref{fig:variable-density-sigerr}, we show how the annotation
error is degraded as we remove annotations. In particular, it is clear
that using the thresholded zero-one annotation error significantly
degrades the approximation of the FP curve. Nevertheless, it is worth
noting that minimum of the zero-one error still uniquely identifies
the correct model with 7 segments. Even after removing many
annotations, the minimum error still identifies the correct
model, but not uniquely. 


\begin{figure}[H]
  \input{figure-variable-density-sigerr}
  \caption{Comparison of annotation error functions as the set of
    annotations changes. Minima are highlighted using circles.
    \protect\\
    \textbf{Complete}: annotation error
    for a complete set of 6 positive and 7 negative annotations.
    \protect\\
    \textbf{Zero-one}: zero-one annotation error 
    for a complete set of 6 positive and 7 negative annotations.
    \protect\\
    \textbf{Incomplete}: zero-one annotation error 
    for 3 positive and 4 negative annotations.
    \protect\\
    \textbf{Positive}: zero-one annotation error 
    for 3 positive annotations.}
  \label{fig:variable-density-sigerr}
\end{figure}

In conclusion, this section has discussed the connections between the
breakpointError and the annotation error functions. Whereas the
breakpointError is computable only when the true set of breakpoints is
known (e.g. simulated data), the annotation error is readily
computable in any data set using a set of visually determined
annotations. We showed that if the annotations are consistent with the
true breakpoints, then the annotation error function is a good
approximation of the breakpointError
(Figure~\ref{fig:variable-density-sigerr-small}). Finally, we observed
that even after thresholding and removing annotations, the annotation
error function can still be used to identify a set of minimum error
segmentation models (Figure~\ref{fig:variable-density-sigerr}).


\section{Conclusions and future work}

In this paper we defined the breakpointError, which can be used to
quantify the breakpoint detection accuracy of a segmentation model,
when the true breakpoint positions are known. In Section~4 we showed
one application of the breakpointError for determining optimal penalty
constants in several simulated data sets. In Section~5 we discussed
the relationship of the breakpointError to the annotation error, which
has been used for supervised segmentation of real data sets
\citep{HOCKING-breakpoints, HOCKING-penalties, SegAnnDB}. We showed
that the annotation error is a good approximation of the
breakpointError when the annotated regions agree with the true
breakpoints. This provides some justification for using the annotation
error in supervised analysis of real data sets.

For future work, it will be interesting to apply the breakpointError
to more realistic tasks. For example, \citet{perf-eval-framework}
proposed to evaluate breakpoint detection algorithms by adding
breakpoints and noise to real data sets. In their framework, the true
breakpoint positions are known, and a region around each breakpoint is
used to quantify the number of true and false positive breakpoint
detections. Instead of using the zero-one loss with an arbitrarily
sized region, the breakpointError could be used to more precisely
quantify breakpoint estimates, since it counts imprecision
(\ref{eq:imprecision}) in addition to false positive and false
negative breakpoint detections.

To facilitate the use of the breakpointError in future work, it is
implemented in the R package \verb|breakpointError| on R-Forge. It can
be installed in R using

\begin{verbatim}
install.packages("breakpointError", repos="http://r-forge.r-project.org")
\end{verbatim}

\textbf{Acknowledgements}: Thanks to Marco Cuturi for references about
distance functions for comparing probability distributions. Thanks to
Guillem Rigaill for helpful comments on a preliminary version of this
paper.

\bibliographystyle{abbrvnat}
\bibliography{refs}

\end{document}